\documentclass[11pt,english]{amsart}
\usepackage[T1]{fontenc}
\usepackage[latin9]{inputenc}
\usepackage{babel}
\usepackage{amsthm}
\usepackage{amstext}
\usepackage{amssymb}
\usepackage[unicode=true,pdfusetitle,
 bookmarks=true,bookmarksnumbered=false,bookmarksopen=false,
 breaklinks=false,pdfborder={0 0 1},backref=false,colorlinks=false]
 {hyperref}
\usepackage{breakurl}

\makeatletter
\numberwithin{equation}{section}
\numberwithin{figure}{section}
\theoremstyle{plain}
\newtheorem{thm}{\protect\theoremname}
  \theoremstyle{plain}
  \newtheorem{lem}[thm]{\protect\lemmaname}
  \theoremstyle{plain}
  \newtheorem{prop}[thm]{\protect\propositionname}
  \theoremstyle{definition}
  \newtheorem{defn}[thm]{\protect\definitionname}


\usepackage{amsfonts}\usepackage{amsthm}\usepackage{amscd}\usepackage[all]{xy}

\hfuzz2pt 

\flushbottom
\bibliographystyle{alpha}

\newcommand{\ra}{\rightarrow}

\newcommand{\ov}{\overline}

\newcommand{\cO}{{\mathcal O}}

\newcommand{\cX}{{\mathcal X}}
\newcommand{\cY}{{\mathcal Y}}

\newcommand{\bN}{{\mathbb N}}

\newcommand{\bR}{{\mathbb R}}

\newcommand{\bZ}{{\mathbb Z}}
\newcommand{\bQ}{{\mathbb Q}}
\newcommand{\bF}{{\mathbb F}}

\newcommand{\bA}{{\mathbb A}}
\newcommand{\bP}{{\mathbb P}}

\newcommand{\GL}{\operatorname{GL}}

\makeatother

  \providecommand{\definitionname}{Definition}
  \providecommand{\lemmaname}{Lemma}
  \providecommand{\propositionname}{Proposition}
\providecommand{\theoremname}{Theorem}

\begin{document}

\title{Homogeneous number of free generators}

\author{Menny Aka}

\author{Tsachik Gelander}

\author{Gregory A. So{\u\i}fer}
\begin{abstract}
We address two questions of Simon Thomas. First, we show that for
any $n\geq3$ one can find a four generated free subgroup of $SL_{n}(\bZ)$
which is profinitely dense. More generally, we show that an arithmetic
group $\Gamma$ which admits the congruence subgroup property, has
a profinitely dense free subgroup with an explicit bound of its rank.
Next, we show that the set of profinitely dense, locally free subgroups
of such an arithmetic group $\Gamma$ is uncountable.
\end{abstract}
\maketitle

\section{Introduction}

Let $G$ be a simply-connected semisimple algebraic group defined
over $\bQ$ with fixed embedding to $GL_{n}$. Let $\Gamma$ be an
arithmetic subgroup of $G(\bar{\bQ})$, i.e., a group commensurable
to $G(\bZ):=G(\bar{\bQ})\cap GL_{n}(\bZ)$. Assume moreover that $G(\bR)$
is non-compact. The aim of this paper is to show the following: 
\begin{thm}
\label{thm:Main Theorem}Assume that $G$ admits the congruence subgroup
property (see $\S$\ref{sub:CSP}), and let $d(\widehat{\Gamma})$
be the minimal number of generators of the profinite completion of
$\Gamma$ (see $\S$\ref{sub:Basic-properties-profinite}). Then,
there exists a free subgroup $F\subset\Gamma$ on at most $2+d(\widehat{\Gamma})$
generators which is profinitely dense, i.e., maps onto any finite
quotient of $\Gamma$. 
\end{thm}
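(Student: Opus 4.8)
The plan is to begin with a small Zariski-dense free subgroup of $\Gamma$, use strong approximation and the congruence subgroup property to see that its profinite closure has finite index in $\widehat\Gamma$, and then enlarge it by exactly $d(\widehat\Gamma)$ further elements so as to gain full profinite density while keeping the group free. First I would choose $a,b\in\Gamma$ generating a free group $F_{0}=\langle a,b\rangle$ of rank $2$ that is Zariski-dense in $G$ and whose generators are \emph{very proximal} — with proximality constants as good as we please, and with attracting/repelling data in general position — for a fixed strongly irreducible proximal representation $V$ of $G$. Such a pair exists by the standard Tits/Margulis--Soifer ping-pong construction: one takes $a$ and $b$ to be high powers of regular semisimple elements of $\Gamma$ whose cyclic hulls are Zariski-dense in maximal tori in general position, so that $\langle a,b\rangle$ stays Zariski-dense, while high powers make the proximality arbitrarily strong. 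Here one uses that $\Gamma$ is Zariski-dense in the semisimple group $G$ and that $G(\bR)$ is non-compact.

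Next, by strong approximation (Nori, Weisfeiler, Matthews--Vaserstein--Weisfeiler) together with the congruence subgroup property ($\S$\ref{sub:CSP}), the closure $\overline{F_{0}}$ of $F_{0}$ in $\widehat\Gamma$ is open; let $N\trianglelefteq\widehat\Gamma$ be its core, an open normal subgroup, put $Q=\widehat\Gamma/N$ (a finite group) and $\Gamma(N)=\ker(\Gamma\to Q)$, a finite-index normal subgroup of $\Gamma$ — in particular still Zariski-dense, indeed a lattice. Since $\widehat\Gamma$, hence $Q$, is topologically generated by $d=d(\widehat\Gamma)$ elements, pick $g_{1},\dots,g_{d}\in\Gamma$ whose images generate $Q$. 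Because $N\subseteq\overline{F_{0}}$, any closed subgroup of $\widehat\Gamma$ that contains $\overline{F_{0}}$ and surjects onto $Q$ is all of $\widehat\Gamma$; hence for \emph{any} choice of $g_{i}'\in g_{i}\Gamma(N)$ the subgroup $\langle a,b,g_{1}',\dots,g_{d}'\rangle\le\Gamma$ is automatically profinitely dense.

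It then remains to choose the $g_{i}'\in g_{i}\Gamma(N)$ so that $a,b,g_{1}',\dots,g_{d}'$ freely generate a free group of rank $2+d$; this group is then the one asserted by the theorem. Fix a proximal $h\in\Gamma(N)$ with attracting point $p_{h}$ and repelling hyperplane $H_{h}$ in $\bP(V)$, and for $\gamma\in\Gamma(N)$ and $M\ge1$ set $g_{i}'=g_{i}\,\gamma\,h^{M}\gamma^{-1}$; this lies in $g_{i}\Gamma(N)$, and for $M$ large it is very proximal with attracting point near $g_{i}\gamma p_{h}$ and repelling hyperplane near $\gamma H_{h}$. As $\Gamma(N)$ is a lattice, its orbits on the flag variety of $V$ are dense (its limit set is the whole flag variety), so the map $\gamma\mapsto(g_{i}\gamma p_{h},\gamma H_{h})$ has dense image, and I can pick $\gamma_{1},\dots,\gamma_{d}\in\Gamma(N)$ putting the attracting/repelling data of the $g_{i}'$ in general position with each other and with that of $a$ and $b$, with margins exceeding the (fixed, small) proximality constants of $a$ and $b$. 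Taking the exponents $M$ large enough, the ping-pong lemma for very proximal elements then gives that $a,b,g_{1}',\dots,g_{d}'$ freely generate a free group of rank $2+d(\widehat\Gamma)$, which by the previous paragraph is profinitely dense.

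I expect the main obstacle to be exactly this last step: running the ping-pong argument subject to the constraint that the new generators lie in prescribed congruence classes modulo $\Gamma(N)$. What makes it go through is that $\Gamma(N)$ is large in two complementary senses — being Zariski-dense it meets every nonempty Zariski-open set, which handles the general-position requirements, and being a lattice its boundary orbits are metrically dense, which lets one realize the required dynamical configurations with definite margins against the data of $a$ and $b$ fixed in the first step. A secondary point, already dealt with in the first step, is that $a$ and $b$ must generate a Zariski-dense subgroup and be arbitrarily strongly proximal at the same time.
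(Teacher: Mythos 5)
Your proposal follows the same broad strategy as the paper: (i) start from a rank-$2$ Zariski-dense free subgroup, (ii) use strong approximation plus CSP to get an open profinite closure and an open normal $N$ with $\widehat\Gamma/N$ generated by $d(\widehat\Gamma)$ elements, and (iii) add $d(\widehat\Gamma)$ ping-pong partners lying in prescribed cosets of the corresponding congruence subgroup. The genuine divergence is in step (iii). The paper's technique — and the one the authors single out as the main novelty — is to take an arbitrary hyperbolic $g$ in good position (Lemma~\ref{lem:adding arbitrary trans elt}, via Zariski density), stabilize its dynamical data under $W g^n W$-perturbations (Lemma~\ref{lem:Wg^nW lemma}, Lemma~\ref{lem:W neigh}), and then use the Howe--Moore mixing property on $G/\Lambda$ to produce an element of $x\Lambda$ inside $W g^n W$ (Proposition~\ref{prop:trans elmt in coset}); the inductive bookkeeping is then handled by the $g_{0}$-rooted free system mechanism (Lemma~\ref{lem:Adding element with infinite occurences}, Proposition~\ref{prop:adding elt in coset to free system}). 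You instead build the coset representatives directly as $g_{i}\gamma h^{M}\gamma^{-1}$ with $\gamma,h\in\Gamma(N)$ (so the coset condition is automatic) and then appeal to density of $\Gamma(N)$-orbits on the flag variety to place the attracting/repelling data. Both routes work, and yours is arguably more geometric; but you should be aware that the statement ``$\Gamma(N)$-orbits on the flag variety are dense'' is exactly where the ergodic-theoretic work is hiding. It is equivalent to minimality of the lattice action on $G/P$ (or to the limit set being the whole flag variety plus the uniqueness of the minimal set for Zariski-dense groups, a theorem of Benoist/Guivarc'h), and proving it cleanly uses ergodicity/mixing of the $P$-action on $\Gamma\backslash G$ — i.e., ingredients of comparable depth to the Howe--Moore mixing the paper invokes. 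So this is not a shortcut, just a different packaging of the same ergodic input. Two smaller points: the dynamical data of $g_{i}\gamma h^{M}\gamma^{-1}$ for large $M$ is approximately $(A^{+},A^{-},B^{+},B^{-})\approx(g_{i}\gamma A^{+}(h),\,\gamma A^{-}(h),\,\gamma B^{+}(h),\,g_{i}\gamma B^{-}(h))$; your description records only two of the four pieces, and all four need to be placed transversally, so the orbit you need dense is that of the full flag, not just a point–hyperplane pair. And the order of quantifiers matters: you must fix the target ideal configurations (hence the transversality scale $\epsilon_{0}$) using only the projective data of $a_{0},b_{0}$ — not their powers — \emph{before} choosing how high a power to raise them to, since changing that power changes $\overline{F_{0}}$ and hence $N$; you gesture at this but it deserves to be made explicit, as the paper's rooted-system device is precisely what lets them sidestep this bookkeeping.
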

Let $\alpha(\Gamma)$ be the minimal rank of a profinitely dense free
subgroup of $\Gamma$. That is, Theorem \ref{thm:Main Theorem} claim
that $\alpha(\Gamma)\leq d(\Gamma)+2$ . In \cite{SV2000} it is proved
that $\Gamma$ as above admits a profinitely dense free subgroup of
finite rank. Consequently, Simon Thomas asked whether one can find
a uniform bound on $\alpha(SL_{n}(\bZ)),n\geq3$. It is known that
$SL_{n}(\bZ)$ is generated by two elements for all $n\ge2$ (see
\cite{Tr62}) and that $SL_{n}(\bZ)$ (see $\S$\ref{sub:CSP}) admits
the Congruence Subgroup Property for $n\geq3$. Thus Theorem \ref{thm:Main Theorem}
implies that for any $n\geq3$ there exists a free profinitely dense
subgroup of $SL_{n}(\bZ)$ with rank $\leq4$. In particular, given
a family $\{\Gamma_{n}\}$ for such arithmetic groups a uniform bound
on $d(\Gamma_{n})$ will provide a uniform bound on $\alpha(\Gamma_{n})$.
It is interesting whether $\alpha(SL_{n}(\bZ))=2$ for all $n$. We
note that there are arithmetically defined families such that $\alpha(\Gamma_{n})$
is not uniformly bounded. For example, let $\Delta_{n}:=SL_{n}(\bZ)$
and for a rational prime $p$ let $\Delta_{n}(p):=\{\gamma\in\Delta_{n}:\gamma\equiv I(mod\, p)\}$.
It is shown in \cite[Theorem 1.1]{LS76} that the finite quotient
$\Delta_{n}(p)/\Delta_{n}(p^{2})$ is a vector space over $\bF_{p}$
of dimension $n^{2}-1$ so any profinitely dense and free subgroup
has as least $n^{2}-1$ generators. That is $\alpha(\Delta_{n}(p))\geq n^{2}-1$
and in particular not uniformly bounded.

We remark that although it is shown in \cite{BG2007} that the profinite
completion $\widehat{\Gamma}$ of any non-virtually solvable subgroup
$\Gamma$, has a free dense subgroup of rank $d(\Gamma)$, it is in
general impossible to find a free group inside $\Gamma$. For example,
Fuchcian groups are LERF \cite{SC78}(i.e. every finitely generated
subgroup is the intersection of the finite-index subgroups that contain
it), hence cannot admit a finitely generated profinitely dense proper
subgroup. 

We also address another question of Simon Thomas. Let $\mathfrak{U}$
be the set of locally free subgroups of $\Gamma$ containing a profinitely
dense finitely generated subgroup.
\begin{thm}
\label{thm:Main Theorem maximal subgroups}The set $\mathfrak{U}_{m}$
of maximal elements of $\mathfrak{U}$ is uncountable.
\end{thm}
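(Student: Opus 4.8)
The plan is to construct, for every $\xi\in\{0,1\}^{\mathbb N}$, a member $L_\xi$ of $\mathfrak{U}$ with the property that $\langle L_\xi,L_\eta\rangle$ is not locally free whenever $\xi\ne\eta$. This already suffices. First, $\mathfrak{U}\ne\emptyset$: by Theorem \ref{thm:Main Theorem} it contains the finite-rank profinitely dense free subgroup $F$. Next, $\mathfrak{U}$ is closed under unions of chains, since the union of a chain of locally free subgroups is again locally free — every finitely generated subgroup of the union already lies in a single member of the chain — and it still contains a profinitely dense finitely generated subgroup. By Zorn's lemma, then, every member of $\mathfrak{U}$, and in particular each $L_\xi$, is contained in some maximal element. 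If some $M\in\mathfrak{U}_m$ contained both $L_\xi$ and $L_\eta$ with $\xi\ne\eta$, then $\langle L_\xi,L_\eta\rangle\subseteq M$ would be locally free, contrary to construction; hence the nonempty sets $\cM_\xi:=\{M\in\mathfrak{U}_m:L_\xi\subseteq M\}$, $\xi\in\{0,1\}^{\mathbb N}$, are pairwise disjoint, and therefore $|\mathfrak{U}_m|\ge 2^{\aleph_0}$.

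To produce the groups $L_\xi$ I will build a binary tree of finitely generated free subgroups of $\Gamma$: a family $(H_s)$ indexed by the finite words $s$ over $\{0,1\}$, with $H_\emptyset=F$, with $H_s\subseteq H_{s0}$ and $H_s\subseteq H_{s1}$, and — the one essential requirement — with $\langle H_{s0},H_{s1}\rangle$ not locally free for every $s$. Then $L_\xi:=\bigcup_{n\ge0}H_{\xi|_n}$ is an increasing union of finitely generated free groups, hence locally free; it contains $H_\emptyset=F$, hence lies in $\mathfrak{U}$; and if $\xi\ne\eta$ have longest common prefix $s$, then $\langle L_\xi,L_\eta\rangle\supseteq\langle H_{s0},H_{s1}\rangle$ is not locally free. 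As the tree is defined recursively, everything reduces to the following extension step.

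\emph{Extension step.} Given a finitely generated free subgroup $H\le\Gamma$ — which has infinite index in $\Gamma$ ($\Gamma$ not being virtually free) and contains the Zariski-dense group $F$, so that its limit set $\Lambda(H)$ is a proper closed subset of the boundary — I will produce finitely generated free subgroups $H_0,H_1$ with $H\subseteq H_i$ and $\langle H_0,H_1\rangle$ not locally free. Fix, once and for all, a finitely generated non-free subgroup $R\le\Gamma$ together with finitely generated \emph{free} subgroups $R_0,R_1$ such that $R=\langle R_0,R_1\rangle$; the simplest choice is $R\cong\bZ^2$ with $R_0,R_1$ two infinite cyclic subgroups, and $\Gamma$ contains such an $R$ unless it is a uniform lattice of real rank one, in which case one takes instead a surface subgroup $R$ with $R_0,R_1$ generated by the two halves of a standard generating set. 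Now apply the ping-pong (combination) technique of \cite{SV2000}: one can choose $g\in\Gamma$, writing $R':=gRg^{-1}$ and $R_i':=gR_ig^{-1}$, so that $\langle H,R'\rangle=H*R'$. Indeed it is enough that the limit set of $R'$ be disjoint from $\Lambda(H)$, and this can be arranged because $\Lambda(H)$ is proper and $\Gamma$ contains very proximal elements that push $\Lambda(R)$ off $\Lambda(H)$ (after replacing the generators of $R$ by high powers if necessary). Then $H_0:=\langle H,R_0'\rangle=H*R_0'$ and $H_1:=\langle H,R_1'\rangle=H*R_1'$ are finitely generated free (free products of free groups), each contains $H$, while $\langle H_0,H_1\rangle=\langle H,R'\rangle=H*R'$ contains the non-free group $R'\cong R$ and hence is not locally free. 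This completes the step, and with it the construction.

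The main obstacle is the extension step, specifically the combination input: that a finitely generated Zariski-dense free subgroup of $\Gamma$ can, after conjugation, be placed in general position with respect to the fixed non-free group $R$ so that the two generate their free product $H*R$. This is precisely the ping-pong machinery that underlies the construction of profinitely dense free subgroups in \cite{SV2000}, and the work lies in extracting it in a form valid uniformly at every node of the tree, and in checking that the standing hypotheses on $G$ (the congruence subgroup property and the noncompactness of $G(\bR)$) really do furnish a subgroup $R$ of the required shape. The remaining points — the nesting of the $H_s$, the membership $L_\xi\in\mathfrak{U}$, and the pairwise disjointness of the sets $\cM_\xi$ — are routine.
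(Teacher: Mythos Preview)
Your overall architecture is sound and genuinely different from the paper's. The paper argues by diagonalisation: it assumes $\mathfrak U_m=\{U_i\}$ is countable, enumerates all cosets $\{F_i\}$ of finite-index subgroups, and builds a single sequence $g_1,g_2,\ldots$ with $g_{2k-1}\in F_k$ (using Proposition~\ref{prop:adding elt in coset to free system}) and $g_{2k}\notin U_k$ (using Proposition~\ref{prop:adding elt outside a subgroup}), so that $H=\langle g_i\rangle$ is free, profinitely dense, and not contained in any $U_i$, a contradiction. Your tree construction, by contrast, would produce $2^{\aleph_0}$ pairwise incompatible members of $\mathfrak U$ directly, which is a stronger conclusion and avoids the need for anything like Proposition~\ref{prop:adding elt outside a subgroup}.

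The gap is in the extension step. You assert that for $R'\cong\bZ^2$ one can arrange $\langle H,R'\rangle=H\ast R'$ by ``ping-pong/combination from \cite{SV2000}'', using limit-set language. Two problems. First, the limit-set formalism you invoke is a rank-one phenomenon; the groups here act on a projective space over a local field and there is no analogue of ``$\Lambda(H)$ proper implies Klein combination with anything whose limit set is pushed off it''. Second, and more seriously, a copy of $\bZ^2$ sitting in a split torus contains elements arbitrarily close to every Weyl-chamber wall, hence elements with arbitrarily weak projective dynamics; no neighbourhood $X_{R'}$ of the finitely many torus eigendirections will satisfy $(R'\setminus\{1\})X_H\subset X_{R'}$. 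The reference \cite{SV2000} does not carry out such a combination either. Your rank-one fallback (a surface group $R$) is not better justified: existence of surface subgroups in arbitrary uniform rank-one lattices is not available off the shelf, and you would still owe a combination argument.

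What you actually need is much weaker than $H\ast R'$, and this is salvageable with the paper's own tools. You only need $\langle H,R_0'\rangle=H\ast R_0'$ and $\langle H,R_1'\rangle=H\ast R_1'$ separately, together with $[R_0',R_1']=1$. Choose two commuting hyperbolic elements $a,b\in\Gamma$ lying in the \emph{same} open Weyl chamber of a $K$-split torus, so that $A^\pm(a)=A^\pm(b)$ and $B^\pm(a)=B^\pm(b)$; conjugate by a Zariski-generic $\gamma\in\Gamma$ to make these transversal to the rooted free system carrying $H$. Then the construction of Lemma~\ref{lem:Adding element with infinite occurences} produces open sets $Y_i$ and an exponent $n_1$ for which $\{h_1,\ldots,h_k,(\gamma a\gamma^{-1})^{n_1}\}$ is a rooted free system, and the \emph{same} sets and exponent work for $(\gamma b\gamma^{-1})^{n_1}$ because the dynamical data coincide. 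This yields your $H_0,H_1$ free with $\langle H_0,H_1\rangle\supseteq\bZ^2$. You should replace the $H\ast R'$ claim with this argument, and note explicitly where the existence of such $a,b$ uses the standing CSP hypothesis (which in all known cases forces higher rank).
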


In essence, these theorems can be proved using techniques and results
of \cite{MS81,BG2007}. The advantage of the following scheme of proof
is in being elementary, using simple tools from ergodic theory.\\
Following Tits \cite{Tits1972}, in order to find free subgroups we
use dynamics on projective spaces and we review relevant definitions
and properties in $\S$\ref{sec:Dynamics-on-projective}. Tits' original
result allows us to find a Zariski-dense free subgroup $\langle h_{1},h_{2}\rangle$
of $\Gamma$ of rank $2$. Assuming the Congruence Subgroup Property,
the closure of $\langle h_{1},h_{2}\rangle$ in the profinite topology
is of the form $\widehat{\Gamma'}$ for a  finite-index subgroup $\Gamma'<\Gamma$,
as we explain in $\S$\ref{sec:Profinite-completions}. In order to
find a profinitely dense free subgroup, we will find so-called {}``ping-pong''
partners to $h_{1}$ and $h_{2}$ that will belong to specified cosets
of $\Gamma'$ in $\Gamma$. We will need to add at most $d(\widehat{\Gamma})$
elements in order to construct a profinitely dense free subgroup.
This will be done in two steps. The first step, is to find elements
in cosets with prescribed dynamics on the projective space. In this
step we establish the main new technique of this paper; we use the
mixing property of the action of $G$ on the homogeneous space $G/\Gamma$.
This is done in $\S$\ref{sec:Elements-in-cosets} where we also recall
necessary notions from Ergodic Theory. The second step, is to use,
with necessary modifications, the mechanism of rooted free system
(which originates in \cite{MS81}) in order to inductively add {}``ping-pong''
partners with desirable properties. This is done in $\S$\ref{sec:Free-rooted-systems}.
Using all the above ingredients, we conclude the proofs of Theorems
\ref{thm:Main Theorem} and \ref{thm:Main Theorem maximal subgroups}
in $\S$\ref{sec:Proof-main-thm}.

\section{Dynamics on projective spaces\label{sec:Dynamics-on-projective}}

\subsection{Proximal and Hyperbolic elements}

Let $V$ be a vector space of dimension $n$ over a local field $K$,
$\bP(V)$ the associated projective space and $v\mapsto[v]$ the associated
projection map. The group $GL(V)$ acts naturally on $\bP(V)$ by
$g[v]=[gv]$. An element $g\in\GL(V)$ is called \emph{hyperbolic}
if it is semisimple and admits a unique (counting multiplicities)
eigenvalue of maximal absolute value and minimal absolute value. We
denote by $\mathfrak{H}(G)$ the set of hyperbolic elements of $G$. 

For $g\in\mathfrak{H}(G)$, let $\{v_{1},\dots,v_{n}\}$ be a basis
of eigenvectors such that $v_{1}$ correspond to the unique maximal
eigenvalue of $g$ and $v_{n}$ correspond to the unique minimal eigenvalue
of $g$. Note that although $g$ is not necessarily diagonalizable
over $K$, $span(v_{1},\dots,v_{n-1})$ and $span(v_{2},\dots,v_{n})$
are defined over $K$. Indeed, this follows since there is a unique
extension of the norm on $K$ to any algebraic extension of $K$ (See
e.g. \cite[Proposition 6.4]{Neukirch}).

We denote the following subsets of $\bP(V)$ by
\[
A^{+}(g)=[v_{1}],A^{-}(g)=[v_{n}]
\]
\[
B^{+}(g)=[span(v_{2},\dots,v_{n})],B^{-}(g)=[span(v_{1},\dots,v_{n-1})]
\]

and $A^{\pm}(g)=A^{+}(g)\cup A^{-}(g),B^{\pm}(g)=B^{+}(g)\cup B^{-}(g)$.
Note that $A^{\pm}(h)\subset B^{\pm}(h)$. 

For further use we record some basic properties: 
\[
hA^{+}(g)=A^{+}(hgh^{-1})
\]
 and likewise for $A^{-},B^{+},B^{-}$. Also 
\begin{equation}
\forall g\in\mathfrak{H}(G)\quad A^{\pm}(g)=A^{\pm}(g^{n}),B^{\pm}(g)=B^{\pm}(g^{n}).\label{eq:attractig same for powers}
\end{equation}

and 
\begin{equation}
\forall g\in\mathfrak{H}(G)\quad A^{+}(g^{-1})=A^{-}(g)\label{eq:inverse}
\end{equation}

\subsection{Distance on projective spaces}

In order to study separation properties of projective transformations,
we consider the following, so-called \emph{standard metric} on $\bP(V)$.
For any $[v],[w]\in\bP(V)$ let 
\[
d(a,b)=\frac{\Vert v\wedge w\Vert}{\Vert v\Vert\Vert w\Vert}.
\]
See \cite[Section 3]{BG2003} for the choices of the above norms and
related properties. The reader should check that $d$ is well-defined
and that $d$ induces the topology on $\bP(V)$ that is inherited
from the local field $K$. For any two sets $A,B\subset\bP(V)$ we
set $d(A,B)=min\{d(a,b):a\in A,b\in B\}.$ We also need the following
notion of distance between sets. For any two sets $A,B\subset\bP(V)$
we define the Hausdorff distance 
\[
d_{h}(A,B)=max\{\underset{a\in A}{sup}\,\underset{b\in B}{inf}d(a,b),\underset{b\in B}{sup}\,\underset{a\in A}{inf}d(a,b)\}
\]
The only property of the Hausdorff distance that we are going to use
is the following: Let $a\in\bP(V),B,C\subset\bP(V)$. Then, $d(a,C)\leq d(a,B)+d_{h}(B,C)$.

\subsection{Transversality}

We call two elements $g,h\in\mathfrak{H}(G)$ \emph{transversal,}
and denote $g\perp h$,\emph{ }if $A^{\pm}(g)\subset\bP\setminus B^{\pm}(h)$
and $A^{\pm}(h)\subset\bP\setminus B^{\pm}(g)$. If moreover, $d(A^{\pm}(g),B^{\pm}(h))>\epsilon$
and $d(A^{\pm}(h),B^{\pm}(g))>\epsilon$ $ $ they are called \emph{$\epsilon$-transversal.
}Clearly, any finite set of transversal elements are $\epsilon$-transversal
for some $\epsilon>0$.

\section{Profinite completions and the congruence subgroup property\label{sec:Profinite-completions}}

\subsection{Basic properties\label{sub:Basic-properties-profinite}}

For background on profinite groups the reader can consult the book
of Ribes and Zalesskii \cite{RZ2010}. For completeness, we record
here the definition of the profinite completion of a group. We write
$A<_{fi}B$ to mean that $A$ is a finite-index subgroup of $B$.
Let $\Gamma$ be a finitely generated group. The profinite topology
on $\Gamma$ is defined by taking as a fundamental system of neighborhoods
of the identity the collection of all normal subgroups $N$ of $\Gamma$
such that $\Gamma/N$ is finite. One can easily show that a subgroup
$H$ is open in $\Gamma$ if and only if $H<_{fi}\Gamma$ . We can
complete $\Gamma$ with respect to this topology to get 
\[
\widehat{\Gamma}:=\varprojlim\{\Gamma/N:N\vartriangleleft_{fi}\Gamma\};
\]
this is the profinite completion of $\Gamma$. There is a natural
homomorphism $i:\Gamma\rightarrow\widehat{\Gamma}$ given by $i(\gamma)=\varprojlim(\gamma N)$.
A subgroup $\Lambda<\Gamma$ is profinitely dense in $\Gamma$ if
and only if $i(\Lambda)$ is dense in $\widehat{\Gamma}$ which in
turn is equivalent to the property that $\Lambda$ is mapped onto
every finite quotient of $\Gamma$. The minimal number of elements
of $\widehat{\Gamma}$ needed to generate a dense subgroup of $\widehat{\Gamma}$
is denoted by $d(\widehat{\Gamma})$.
\begin{lem}
\label{lem:subgroups of profinite completion} Let $\Gamma$ be a
residually finite group. There is a one-to-one correspondence between
the set $\cX$ of all cosets of finite-index subgroup of $\Gamma$
and the set $\cY$ of all cosets of open subgroups of $\widehat{\Gamma}$,
given by 
\[
X\mapsto cl(X)\quad(X\in\cX)
\]
 
\[
Y\mapsto Y\cap\Gamma\quad(Y\in\cY)
\]
 where $cl(X)$ denotes the closure of $X$ in $\widehat{\Gamma}$.
Moreover, this correspondence maps normal subgroups to normal subgroups.\end{lem}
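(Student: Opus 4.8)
The plan is to establish the correspondence by showing that the two maps $X \mapsto cl(X)$ and $Y \mapsto Y \cap \Gamma$ are mutually inverse, reducing first to the case of subgroups and then extending to cosets by translation. First I would recall the standard fact (see \cite{RZ2010}) that for a residually finite finitely generated $\Gamma$ the natural map $i \colon \Gamma \to \widehat{\Gamma}$ is injective (so we may regard $\Gamma$ as a dense subgroup of $\widehat{\Gamma}$), and that open subgroups of $\widehat{\Gamma}$ are exactly finite-index subgroups, each of which is also closed. The key local statement is: if $H <_{fi} \Gamma$, then $cl(H)$ is an open subgroup of $\widehat{\Gamma}$ with $cl(H) \cap \Gamma = H$ and $[\widehat{\Gamma} : cl(H)] = [\Gamma : H]$; conversely, if $U$ is an open subgroup of $\widehat{\Gamma}$, then $U \cap \Gamma$ is a finite-index subgroup of $\Gamma$ which is dense in $U$, so that $cl(U \cap \Gamma) = U$.

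For the first direction, given $H <_{fi} \Gamma$ I would pass to a normal $N \vartriangleleft_{fi} \Gamma$ with $N \subseteq H$ (intersect the finitely many conjugates of $H$); the quotient map $\Gamma \to \Gamma/N$ extends to a continuous surjection $\widehat{\Gamma} \to \Gamma/N$ whose kernel is the closure $cl(N)$, and $cl(H)$ is the preimage of $H/N$, hence open of index $[\Gamma:H]$. Density of $\Gamma$ in $\widehat{\Gamma}$ forces $cl(H) \cap \Gamma = H$: any $\gamma \in cl(H) \cap \Gamma$ maps into $H/N$ in $\Gamma/N$, so $\gamma \in H$. For the converse direction, an open subgroup $U$ contains an open normal subgroup $M \vartriangleleft \widehat{\Gamma}$; then $M \cap \Gamma$ is normal of finite index in $\Gamma$ and is dense in $M$ (as $\Gamma$ is dense in $\widehat{\Gamma}$ and $M$ is open), from which one deduces $U \cap \Gamma$ is dense in $U$, so $cl(U \cap \Gamma) = U$ and $U \cap \Gamma$ has finite index in $\Gamma$. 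Combining the two computations shows the maps are inverse bijections on subgroups.

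To upgrade to cosets, note that a coset of a finite-index subgroup $H$ of $\Gamma$ can be written as $\gamma H$ with $\gamma \in \Gamma$, and $cl(\gamma H) = i(\gamma)\, cl(H)$ since left translation by $i(\gamma)$ is a homeomorphism of $\widehat{\Gamma}$; similarly every coset of an open subgroup $U$ of $\widehat{\Gamma}$ meeting $\Gamma$ has the form $i(\gamma) U$ for some $\gamma \in \Gamma$ (cosets either miss $\Gamma$ or contain a point of the dense set $\Gamma$), and $(i(\gamma) U) \cap \Gamma = \gamma (U \cap \Gamma)$. The subgroup case then transports verbatim, giving the claimed bijection between $\cX$ and $\cY$. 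Finally, normality is preserved because $cl$ is a continuous homomorphic image construction compatible with conjugation: $g\, cl(H)\, g^{-1} = cl(g H g^{-1})$ for $g$ in the dense subgroup $\Gamma$, and by continuity and density this extends to all $g \in \widehat{\Gamma}$, so $H \vartriangleleft \Gamma$ iff $cl(H) \vartriangleleft \widehat{\Gamma}$. I expect the main obstacle to be the bookkeeping around density arguments — specifically, making sure that $U \cap \Gamma$ is genuinely dense in $U$ (not merely of the right index) and that cosets interact correctly with closure — but these all follow cleanly once one reduces to normal subgroups via the lattice of open normal subgroups, which is cofinal.
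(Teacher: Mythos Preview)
Your proof is correct and follows the same overall architecture as the paper: establish the bijection for subgroups first, then transport it to cosets by translation. The only difference is one of presentation rather than strategy: the paper does not write out the subgroup case at all but simply cites \cite[Window: profinite groups, Proposition 16.4.3]{LubotzkySegal} and declares the extension to cosets ``immediate,'' whereas you supply the full argument (reduction to a normal $N$, identification of $cl(H)$ as the preimage of $H/N$ under $\widehat{\Gamma}\to\Gamma/N$, density of $\Gamma\cap U$ in an open $U$, and the conjugation-by-density check for normality). Your version is therefore more self-contained; the paper's version is shorter but depends on an external reference. One minor wording remark: your parenthetical ``cosets either miss $\Gamma$ or contain a point of the dense set $\Gamma$'' could be sharpened to the direct observation that every coset of an open subgroup is open and hence meets the dense subgroup $\Gamma$, so no coset is missed.
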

\begin{proof}
See \cite[Window: profinite groups, Proposition 16.4.3]{LubotzkySegal}
for a proof of a correspondence between finite-index subgroup of $\Gamma$
and the set $\cY$ of all open subgroups of $\widehat{\Gamma}$. The
natural generalization to cosets is immediate.
\end{proof}

\subsection{The congruence subgroup property\label{sub:CSP}}

We give here a brief survey; for proofs of the assertions below the
reader can consult \cite{PR08}. 

For arithmetic groups, which are by definition commensurable to groups
of the form $G(\cO_{k})$ we can give another interesting topology
which is weaker then the profinite topology. As stated in the beginning,
$G$ is a simply-connected semisimple algebraic group defined over
a number field $k$ and equipped with an implicit $k$-embedding into
$GL_{n}$. For any non-zero ideal $I$ of $\cO_{k}$ let $K_{I}$
be the kernel of the map 
\begin{equation}
G(\cO_{k})\ra G(\cO_{k}/I).\label{eq:Cong map}
\end{equation}
The completion of $G(\cO_{k})$, with respect to the topology in which
these kernels form a system of neighborhoods of the identity, is called
the congruence completion. We denote this completion by $\ov{G(\cO_{k})}$.
Under the assumption we made on $G$, the strong approximation Theorem
holds for $G$. This means that the maps in \ref{eq:Cong map} are
surjective for all but finitely many ideals $I$ (See \cite[Theorem 7.12]{PR94}).
Using this, one can show that $\overline{G(\cO_{k,S})}$ is naturally
isomorphic to the open compact subgroup $\prod_{v\in V^{f}}G(\cO_{v})$
of $G(\bA_{k,f})$ where $V^{f}$ denotes the finite valuations of
$k$ and $\bA_{k,f}$ denotes the ring of finite Adeles (See \cite[section 1.2]{PR94}).
As the profinite topology is stronger, we have following the short
exact sequence 
\[
1\ra C(G)\ra\widehat{G(\cO_{k})}\ra\ov{G(\cO_{k})}\ra1
\]
and $C(G)$ is called the congruence kernel of $G$ w.r.t. $S$. If
$C(G)$ is finite, we say that $G$ admits the congruence subgroup
property. 

Finally, if $\Gamma$ is commensurable to $G(\cO_{k})$ then the completion
of $\Gamma$ w.r.t. the family $\{K_{I}\cap\Gamma\}_{I\vartriangleleft\cO_{k}}$
is called the congruence completion of $\Gamma$ and is denoted by
$\ov{\Gamma}$. One has a similar short exact sequence

\[
1\ra C(\Gamma)\ra\widehat{\Gamma}\ra\ov{\Gamma}\ra1.
\]
and one say that $\Gamma$ has the congruence subgroup property if
$|C(\Gamma)|<\infty$.

Assuming $\Gamma$ has the congruence subgroup property, the closure
$\overline{\Lambda}<\widehat{\Gamma}$ of a subgroup $\Lambda<\Gamma$
has finite-index if and only if for all but finitely many $ $ $I\vartriangleleft\cO_{k}$,
$\Lambda$ maps onto $\Gamma/(K_{I}\cap\Gamma)$. As stated above
By \cite{Weis84,PR94} every Zariski dense subgroup of $G(\bar{k})$
satisfy the last condition.

\section{Elements in cosets with prescribed dynamics\label{sec:Elements-in-cosets}}

In \cite[Theorem 3]{Tits1972}, Tits construct a strongly irreducible
representation (i.e. no finite union of proper subspaces is invariant)
$\rho:G(k)\rightarrow GL_{d}(K)$ for some $d\in\bN$ and some local
field $K$. From now on, we identify $G$ (also topologically) with
its image under $\rho$. We let $\bP=\bP(K^{d})$ and for $p\in\bP$
we write $gp:=\rho(g)p$. We begin by several lemmata:
\begin{lem}
\label{lem:Wg^nW lemma}Let $g_{0}\in\mathfrak{H}(G)$, and for $j=+,-$
set 
\[
X_{1}^{j}(\epsilon)=\{g\in\mathfrak{H}(G):d(A^{j}(g_{0}),A^{j}(g))<\epsilon\},
\]
\[
X_{2}^{j}(\epsilon)=\{g\in\mathfrak{H}(G):d_{h}(B^{j}(g_{0}),B^{j}(g))<\epsilon\}.
\]
Then, there exists $\epsilon>0$ such that for any $i=1,2,j=+,-$
and $h\in X_{i}^{j}(\epsilon)$, there exist a symmetric neighborhood
of the identity $W\subset G$ and $N\in\bN$ such that for all $n>N$
we have $Wh^{n}W\subset X_{i}^{j}(\epsilon)$.\end{lem}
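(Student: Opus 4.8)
The plan is to reduce the statement to two separate facts: (i) the sets $X_i^j(\epsilon)$ are, for small enough $\epsilon$, "stable under taking high powers" in the sense that $h \in X_i^j(\epsilon)$ forces $h^n \in X_i^j(\epsilon)$ for all $n$ (indeed for all $n\neq 0$), and (ii) there is a uniform continuity statement: the maps $g\mapsto A^j(g)$ and $g\mapsto B^j(g)$ are continuous on $\mathfrak{H}(G)$ at the relevant points, so that conjugating (or multiplying) by elements near the identity perturbs $A^j$ and $B^j$ by an arbitrarily small amount in the respective metrics. Granting these, I would choose $\epsilon$ small enough that an $\epsilon/2$-ball around $A^j(g_0)$ (resp.\ a $d_h$-ball of radius $\epsilon/2$ around $B^j(g_0)$) still lies inside $X_i^j(\epsilon)$ by the triangle inequality, i.e.\ work with a slightly shrunken target.

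First I would record the crucial simplification: by \eqref{eq:attractig same for powers} we have $A^{\pm}(h^n) = A^{\pm}(h)$ and $B^{\pm}(h^n) = B^{\pm}(h)$ for every $n$, so membership in $X_i^j(\epsilon)$ is insensitive to replacing $h$ by $h^n$. Hence $h^n \in X_i^j(\epsilon)$ automatically, and the content of the lemma is entirely about the two flanking factors from $W$. So the real claim is: for $h \in X_i^j(\epsilon)$ there is a symmetric neighborhood $W$ of the identity and an $N$ such that $w_1 h^n w_2 \in X_i^j(\epsilon)$ for all $w_1,w_2 \in W$ and $n > N$.

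Next, for the flanking factors I would use the standard North–South dynamics of a hyperbolic element: for $h \in \mathfrak{H}(G)$, as $n\to\infty$ the map $h^n$ on $\bP$ contracts the complement of any neighborhood of $B^+(h)$ into any neighborhood of $A^+(h)$, uniformly, and symmetrically for $h^{-n}$ with $A^-(h), B^-(h)$. From this one shows that $A^+(w_1 h^n w_2) = w_1 h^n w_2 \cdot A^+(h^{-1}\cdot\text{(fixed point data)})$ converges to $w_1 A^+(h)$ as $n\to\infty$, uniformly for $w_1$ in a compact neighborhood; more precisely, $w_1 h^n w_2$ is again hyperbolic for $n$ large (its top eigendirection is a small perturbation of $w_1 A^+(h)$, its top hyperplane of $w_1 B^+(h)$, etc.), and the convergence $A^+(w_1 h^n w_2)\to w_1 A^+(h)$, $B^+(w_1 h^n w_2)\to w_1 B^+(h)$ (and similarly with $-$) is uniform in $w_1, w_2 \in W$. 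Then shrinking $W$ so that $w_1$ moves $A^+(g_0)$ by less than $\epsilon/2$ in $d$ and $B^+(g_0)$ by less than $\epsilon/2$ in $d_h$ (possible since $G$ acts continuously, indeed by isometries up to bounded distortion, on $(\bP,d)$, and since $w_1 \in X_i^j$ when $w_1$ is close enough to $\Id$ — using $\Id\in X_i^j(\epsilon)$ after noting $A^j(g_0), B^j(g_0)$ are fixed by $\Id$... here one must instead argue directly that $h$ itself being in $X_i^j(\epsilon)$ with a little room, plus $w_1$ small, keeps $w_1 h$-data inside), and absorbing the remaining error into the choice of large $N$, gives the result. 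The argument is run separately but identically for $j=+$ and $j=-$, and for $i=1$ (point distance $d$) versus $i=2$ (Hausdorff distance $d_h$ of hyperplanes), the only difference being which metric one tracks.

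The main obstacle I anticipate is making the uniformity in $w_2$ (the \emph{right} factor, which sits "inside" the power) genuinely uniform: left multiplication by $w_1$ is harmless since $G$ acts continuously on $\bP$, but $w_2$ interacts with the contraction, so one needs that the North–South convergence of $h^n$ is uniform over the compact family $\{w_2 : w_2 \in \overline{W}\}$ of perturbations — equivalently that no $w_2$ pushes the relevant eigendata into the repelling hyperplane $B^-(h)$. This is handled by first fixing a neighborhood $U$ of $A^+(h)$ and $U'$ of $B^+(h)$ with $\overline{U}\cap \overline{U'}=\emptyset$ wait — rather: choose $W$ small enough that $w_2^{-1}$ does not move $B^-(h)$ to meet $A^+(h)$ and $w_1$ does not move $A^+(h)$ into a bad region, then the product map $w_1 h^n w_2$ has, for large $n$, a dominant eigenvalue with eigenline near $w_1 A^+(h)$ by a perturbation/fixed-point argument (Brouwer or a contraction-mapping estimate on a small ball in $\bP$), and the estimates are uniform because everything takes place over the compact set $\overline{W}$. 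Once this uniform North–South statement is in hand, the rest is the triangle inequality for $d$ and the inequality $d(a,C)\le d(a,B)+d_h(B,C)$ for $d_h$, both already recorded in the paper.
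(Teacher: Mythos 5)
Your proposal is correct and takes essentially the same approach as the paper: both reduce to a North--South-dynamics/ping-pong argument in which a small symmetric $W$ is chosen so that the flanking factors cannot push the relevant eigendata onto the repelling hyperplane, and then a fixed-point/contraction argument on a small neighborhood $U$ of the attracting data shows that $A^{\pm}(w_1h^nw_2)$ (resp.\ $B^{\pm}$) lands where required for $n$ large, the $i=2$ case following by duality. The paper's write-up is tighter -- it pins down $\epsilon$ at the outset and organizes the argument around explicit nested neighborhoods $U'\subset U\subset B(A^{+}(g_0),\epsilon)$ with $(Wh^nW)U\subset U$ -- but the underlying mechanism matches yours, including the preliminary use of $A^{\pm}(h^n)=A^{\pm}(h)$ and the triangle-inequality bookkeeping via $d$ and $d_h$.
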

\begin{proof}
We start with i=1 and $j=+$. We first choose $\epsilon$ such that
\[
\bigcup_{g\in X_{1}^{+}(\epsilon)}B^{+}(g)
\]
and $B(A^{+}(g_{0}),\epsilon)$ (the ball of radius $\epsilon$) are
disjoint and denote $X_{1}^{+}=X_{1}^{+}(\epsilon)$. Let $h\in X_{1}^{+}$
be an arbitrary element. We can choose $U\subset B(A^{+}(g_{0}),\epsilon)$
a neighborhood of $A^{+}(g_{0})$, and $W$ a neighborhood of the
identity in $G$, such that $WU=\{wu:w\in W,u\in U\}\subset B(A^{+}(g_{0}),\epsilon)$
and $A^{+}(h)\in U'$. By choosing $W$ even smaller we can also find
$U'\subset U$ such that the following are satisfied:
\begin{align*}
 & A^{+}(h)\in U',\\
 & WU'=\{wu:w\in W,u\in U'\}\subset U,\\
 & WU\subset\bP\setminus B^{+}(h),\\
 & \forall n\quad Wh^{n}W\subset\mathfrak{H}(G).
\end{align*}
Then, there exists $N\in\bN$ such that $h^{n}(WU)\subset U'$ for
all $n>N$ and it follows that $(Wh^{n}W)U\subset U$. Therefore,
any element $\tilde{h}\in Wh^{n}W$ with $n>N$, has a fixed point
inside $U$ which is necessarily $A^{+}(\tilde{h})$. Thus, $A^{+}(\tilde{h})\in U$
so by the choice of $U$ we have $d(A^{+}(\tilde{h}),A^{+}(g_{0}))<\epsilon$,
which implies that $Wh^{n}W\subset X_{1}^{+}$ for all $n>N$. Using
equation \ref{eq:inverse}, a proof along the exact same lines shows
the case $i=1,j=-$. 

For the case $i=2,$ one can use duality of hyperplanes and points
in the projective space and proceed with the same proof as above.
Alternatively, we can apply the same proof with the natural action
on $\bP(\wedge^{d-1}K^{d})$ where hyperplanes of the form $B^{+}(h)$
for some $h\in\mathfrak{H}(G)$ correspond to points. \end{proof}
\begin{lem}
\label{lem:adding arbitrary trans elt}Let $g_{1},\dots,g_{n}$ be
hyperbolic elements of $G$. There exists $g\in\mathfrak{H}(G)$ with
$g\perp g_{i}$  for all $i=1,\dots n$.\end{lem}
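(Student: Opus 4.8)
The plan is to produce the desired $g$ as a high power of a single strongly irreducible, hyperbolic element of $G$, conjugated so that its attracting/repelling data avoid the finitely many forbidden subspaces coming from $g_1,\dots,g_n$. Concretely, I would first fix one hyperbolic element $h_0\in\mathfrak H(G)$ (such elements exist in $G$ since the representation $\rho$ is strongly irreducible, by Tits' construction in \cite{Tits1972}); passing to a large power if necessary and using \eqref{eq:attractig same for powers}, one may take $h_0$ to be proximal, so $A^+(h_0),A^-(h_0)$ are single points and $B^+(h_0),B^-(h_0)$ are hyperplanes. The transversality conditions $g\perp g_i$ amount to the four points $A^{\pm}(g),A^{\pm}(g_i)$ avoiding the four hyperplanes $B^{\pm}(g),B^{\pm}(g_i)$, for each $i$ — a finite list of ``point not on hyperplane'' conditions.

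Next I would exploit strong irreducibility: the set of $u\in G$ such that $uA^+(h_0)$ lies on one of the finitely many hyperplanes $B^{\pm}(g_i)$, or such that one of the $A^{\pm}(g_i)$ lies on $uB^{\pm}(h_0)$, is a proper Zariski-closed subset of $G$ (here is where strong irreducibility is essential: no finite union of proper subspaces is $G$-invariant, so the relevant orbit maps are dominant and these bad loci cannot be all of $G$). Since $G$ is Zariski connected and the base field is infinite, the complement of this finite union of proper closed subsets is nonempty; pick $u$ in that complement and set $g_0 := u h_0 u^{-1}$. By the conjugation-equivariance recorded in the excerpt, $A^{\pm}(g_0)=uA^{\pm}(h_0)$ and $B^{\pm}(g_0)=uB^{\pm}(h_0)$, so by construction $A^{\pm}(g_0)\subset \bP\setminus B^{\pm}(g_i)$ and $A^{\pm}(g_i)\subset\bP\setminus B^{\pm}(g_0)$ for every $i$ — i.e. $g_0\perp g_i$ for all $i$.

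Finally, $g_0$ itself already satisfies the conclusion, but if one prefers to state it uniformly one can take $g=g_0^m$ for any $m\geq 1$, since $A^{\pm}(g_0^m)=A^{\pm}(g_0)$ and $B^{\pm}(g_0^m)=B^{\pm}(g_0)$ by \eqref{eq:attractig same for powers}, so $g\in\mathfrak H(G)$ and $g\perp g_i$ for all $i$. The main obstacle is the Zariski-density/irreducibility input: one must be sure that ``$uA^+(h_0)\in B^{\pm}(g_i)$'' carves out a \emph{proper} subvariety of $G$, and not all of $G$ — this is exactly what strong irreducibility of $\rho$ guarantees, because otherwise the line $A^+(h_0)$ would be forced into a fixed hyperplane under all of $G$, contradicting that $G$ has no invariant finite union of proper subspaces. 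Once that point is in hand, the rest is the elementary observation that a connected group over an infinite field is not a finite union of proper closed subsets, plus the bookkeeping with the equivariance and power formulas already established in \S\ref{sec:Dynamics-on-projective}.
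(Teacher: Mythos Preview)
Your argument is correct and essentially identical to the paper's: both produce the transversal element as a conjugate $uh u^{-1}$ of a fixed hyperbolic element by a Zariski-generic $u\in G$, using strong irreducibility to show that the conditions ``$uA^{\pm}(h)\in B^{\pm}(g_i)$'' and ``$u^{-1}A^{\pm}(g_i)\in B^{\pm}(h)$'' cut out proper closed subvarieties. The only cosmetic differences are that the paper takes $h=g_1$ rather than a freshly chosen $h_0$, and that in your write-up you only list the condition on $uA^{+}(h_0)$ and omit the symmetric one on $uA^{-}(h_0)$ --- an obvious slip that the same argument handles.
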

\begin{proof}
Let $U^{+}:=\{g\in G:gA^{+}(g_{1})\notin B^{\pm}(g_{j}),\forall j=1,\dots,n\}$;
We claim that $U^{+}$ is a non-empty Zariski open set. Assume not,
then $\cup_{i=1}^{n}B^{\pm}(g_{i})$ will contain an invariant set
which is a union of proper subspaces, which contradicts our assumption.
Similarly, $U^{-}:=\{g\in G:g^{-1}A^{-}(g_{j})\notin B^{\pm}(g_{1}),\forall j=1,\dots,n\}$
is a non-empty Zariski open set. As $G$ is Zariski connected, $U=U^{+}\cap U^{-}$
is non-empty open set and for any $g\in U$ we have$A^{\pm}(gg_{1}g^{-1})\notin B^{\pm}(g_{j})$
for all $j=1,\dots n$. Similar argument shows that there exists an
open $V\subset G$ such that for any $g\in V$ we have $g^{-1}A^{\pm}(g_{j})\notin B^{\pm}(g_{1})\Leftrightarrow A^{\pm}(g_{j})\notin B^{\pm}(gg_{1}g^{-1})$.
Thus, any element of the form $gg_{1}g^{-1}$ for some $g\in U\cap V$
is transversal to all $g_{j}$, as needed. 
\end{proof}
The aim of this section is to find such an element $g$ in a given
coset of $G/\Lambda$ where $\Lambda$ is a lattice in $G$. To this
end, we begin by {}``approximating'' the transversal element we
get from Lemma \ref{lem:adding arbitrary trans elt}.
\begin{lem}
\label{lem:W neigh}Given $\epsilon>0$, assume that $\{g_{1},\dots,g_{s},g\}$
is a set of pairwise $\epsilon$-transversal hyperbolic elements.
There exist an integer $N=N(g,\epsilon)$ and a symmetric neighborhood
$W=W(g,\epsilon)$ of the identity in $G$ such that if $n>N$ then
for any $h\in Wg^{n}W$, $\{g_{1},\dots,g_{s},h\}$ is a set of pairwise
$\frac{\epsilon}{2}$-transversal hyperbolic elements.\end{lem}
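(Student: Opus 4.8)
The plan is to combine the structural information from Lemma \ref{lem:Wg^nW lemma} (applied with $g_0 = g$) with a continuity/perturbation argument. Recall that $\epsilon$-transversality of $h$ with each $g_i$ amounts to four inequalities of the form $d(A^\pm(h), B^\pm(g_i)) > \epsilon$ and $d(A^\pm(g_i), B^\pm(h)) > \epsilon$. Since $\{g_1,\dots,g_s,g\}$ are pairwise $\epsilon$-transversal to begin with, it suffices to show that any $h \in Wg^nW$ (for suitable $W$ and $n > N$) has its attracting/repelling data $A^\pm(h)$ and $B^\pm(h)$ so close to those of $g$ that the strict inequalities survive with $\epsilon$ replaced by $\epsilon/2$.

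First I would apply Lemma \ref{lem:Wg^nW lemma} with $g_0 = g$ and with a radius $\epsilon' \le \epsilon/4$ chosen small enough for that lemma's conclusion to hold. For each of the four choices $(i,j)$ with $i=1,2$ and $j=+,-$, the element $g$ itself lies in $X_i^j(\epsilon')$ (trivially, since $d(A^j(g),A^j(g))=0$ and $d_h(B^j(g),B^j(g))=0$), so the lemma furnishes a symmetric identity neighborhood $W_{i,j}$ and an integer $N_{i,j}$ with $W_{i,j}\,g^n\,W_{i,j} \subset X_i^j(\epsilon')$ for all $n > N_{i,j}$. Set $W := \bigcap_{i,j} W_{i,j}$, a symmetric identity neighborhood, and $N := \max_{i,j} N_{i,j}$. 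Then for $n > N$ and any $h \in Wg^nW$ we get simultaneously $d(A^j(g), A^j(h)) < \epsilon'$ and $d_h(B^j(g), B^j(h)) < \epsilon'$ for $j = +,-$; that is, the four attracting points have moved by less than $\epsilon'$ and the two hyperplanes $B^\pm(h)$ lie within Hausdorff distance $\epsilon'$ of $B^\pm(g)$. (One should also shrink $W$ if necessary so that $Wg^nW \subset \mathfrak{H}(G)$ for all $n$, which is again part of what Lemma \ref{lem:Wg^nW lemma} provides, or can be arranged directly as in its proof.)

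Next I would push these perturbation bounds through the transversality inequalities using the single stated property of Hausdorff distance, $d(a,C) \le d(a,B) + d_h(B,C)$. Fix any $g_i$. For the inequality controlling $d(A^\pm(h), B^\pm(g_i))$: pick $a \in A^\pm(h)$; there is a corresponding point $a_0 \in A^\pm(g)$ with $d(a,a_0) < \epsilon'$, and since $d(a_0, B^\pm(g_i)) > \epsilon$ by hypothesis, the triangle inequality for the metric $d$ gives $d(a, B^\pm(g_i)) > \epsilon - \epsilon' \ge \tfrac{3\epsilon}{4} > \tfrac{\epsilon}{2}$. For the inequality controlling $d(A^\pm(g_i), B^\pm(h))$: for $a \in A^\pm(g_i)$ we have $d(a, B^\pm(h)) \ge d(a, B^\pm(g)) - d_h(B^\pm(g), B^\pm(h)) > \epsilon - \epsilon' \ge \tfrac{3\epsilon}{4} > \tfrac{\epsilon}{2}$, using the Hausdorff-distance property exactly in the stated form. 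Taking the minimum over $a$ and over the four sign choices shows $h \perp g_i$ in the quantitative sense $\tfrac{\epsilon}{2}$-transversality, for every $i = 1,\dots,s$; and $h$ is hyperbolic by construction, completing the proof.

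The only mildly delicate point is bookkeeping: making sure that the single $\epsilon'$ feeding Lemma \ref{lem:Wg^nW lemma} is chosen small enough (namely $\epsilon' \le \epsilon/4$, or even just $\epsilon' < \epsilon/2$) that after the two triangle-inequality estimates the surviving gap is still strictly larger than $\epsilon/2$, and that $W$ and $N$ are obtained by intersecting/maximizing over the finitely many $(i,j)$. There is no real obstacle here — the content is entirely in Lemma \ref{lem:Wg^nW lemma}, which already encapsulates the dynamical input (that high powers $g^n$ sandwiched by small $W$ have attracting/repelling data close to that of $g$); the present lemma is the routine translation of that statement into the language of $\epsilon$-transversality.
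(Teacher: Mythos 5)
Your proposal is correct and follows essentially the same route as the paper: it first establishes that for suitable $W$ and $N$, every $h\in Wg^{n}W$ with $n>N$ is hyperbolic with attracting points and repelling hyperplanes close to those of $g$ (by invoking Lemma \ref{lem:Wg^nW lemma} with $g_{0}=g$), and then runs the same two triangle-inequality estimates, one for $d(A^{\pm}(h),B^{\pm}(g_{i}))$ using the metric triangle inequality and one for $d(A^{\pm}(g_{i}),B^{\pm}(h))$ using the stated property of the Hausdorff distance. The paper's own write-up compresses the first step into a single sentence ("It follows readily from Lemma \ref{lem:Wg^nW lemma}\dots"), whereas you make explicit that one intersects the four neighborhoods $W_{i,j}$ and takes the maximum of the $N_{i,j}$; that is a legitimate and helpful clarification, not a deviation. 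Your choice of $\epsilon'\le\epsilon/4$ is slightly more conservative than necessary (the paper works directly with $\epsilon/2$, which already yields a strict $>\epsilon/2$ after subtraction), but this is harmless. One small point worth noting: Lemma \ref{lem:Wg^nW lemma} as stated asserts the existence of \emph{some} $\epsilon$, rather than the conclusion for all sufficiently small $\epsilon$; your argument implicitly uses the latter, which is what the proof of that lemma actually delivers (the defining disjointness condition is monotone in $\epsilon$), so this is fine, but it would be worth a remark.
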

\begin{proof}
We first claim that any $h\in\mathfrak{H}(G)$ with
\begin{equation}
\begin{alignedat}{1}d(A^{+}(h),A^{+}(g))<\frac{\epsilon}{2},\quad & d(A^{-}(h),A^{-}(g))<\frac{\epsilon}{2}\\
d_{h}(B^{+}(h),B^{+}(g))<\frac{\epsilon}{2},\quad & d_{h}(B^{-}(h),B^{-}(g))<\frac{\epsilon}{2}
\end{alignedat}
\label{eq:close to g}
\end{equation}
is $\frac{\epsilon}{2}$- transversal to any element which is $\epsilon$-transversal
to $g$. Indeed, say $g'$ is $\epsilon$-transversal to $g$, then
we have for example: 
\begin{align*}
\epsilon<d(A^{+}(g),B^{\pm}(g'))\leq d(A^{+}(g),A^{+}(h))+d(A^{+}(h),B^{\pm}(g'))\\
\epsilon<d(A^{\pm}(g'),B^{+}(h))\leq d(A^{+}(g'),B^{+}(h))+d_{h}(B^{+}(h),B^{+}(g))
\end{align*}
so using \ref{eq:close to g} we see that $g'$ and $h$ are $\frac{\epsilon}{2}$-transversal.

It follows readily from Lemma \ref{lem:Wg^nW lemma} that there exists
a symmetric neighborhood $W$ of the identity of $G$ and $N=N(g,\epsilon)\in\bN$
such that for large enough $n>N$, any $h\in Wg^{n}W$ is hyperbolic
and satisfy \ref{eq:close to g}.
\end{proof}
Before proceeding to the main proposition of this section we recall
some notions from Ergodic Theory. By the Borel Harish-Chandra Theorem
(See \cite[\S 4.6]{PR94}), arithmetic subgroups are lattices. By
definition, $\Lambda$ is a lattice in $G$ if $\Lambda$ is discrete
and $G/\Lambda$ carry a finite $G$-invariant measure which we denote
by $\mu$. The action of $G$ on $G/\Lambda$ allows us to use techniques
from Ergodic Theory. In particular, for semisimple groups $G$ as
in our case, we have the vanishing theorem of Howe-Moore (see \cite[Chaper III]{BM2000}
and the references therein). It states that for any $g\in G$ with
the property that for any compact subset $K\subset G$ there exists
$M$ such that $g^{n}\notin K$ for all $n>M$, we have 
\begin{equation}
\lim_{n\rightarrow\infty}\mu(A\cap g^{n}B)\rightarrow\mu(A)\mu(B)\label{eq:mixing property}
\end{equation}
for any measurable sets $A,B\subset G/\Lambda$.
\begin{prop}
\label{prop:trans elmt in coset}Let $\Lambda$ be a lattice in $G$
, $\{g_{1},\dots,g_{s}\}$ a set of pairwise transversal hyperbolic
elements, and $x\in G$. Then there exists $h\in x\Lambda$ such that
$\{g_{1},\dots,g_{s},h\}$ is a set of pairwise transversal hyperbolic
elements.\end{prop}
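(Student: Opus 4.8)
The strategy is to combine the "approximation" lemmas of this section (Lemmas \ref{lem:adding arbitrary trans elt} and \ref{lem:W neigh}) with the Howe--Moore mixing property \ref{eq:mixing property} applied to the action of $G$ on $G/\Lambda$. First I would use Lemma \ref{lem:adding arbitrary trans elt} to produce some $g\in\mathfrak{H}(G)$ with $g\perp g_i$ for all $i$; since $\{g_1,\dots,g_s\}$ is assumed transversal, $\{g_1,\dots,g_s,g\}$ is then a finite transversal set, hence $\epsilon$-transversal for some $\epsilon>0$. By Lemma \ref{lem:W neigh} there are a symmetric identity neighborhood $W=W(g,\epsilon)$ and $N=N(g,\epsilon)$ such that every $h\in Wg^nW$ with $n>N$ is hyperbolic and $\tfrac{\epsilon}{2}$-transversal to all the $g_i$. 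So it suffices to find $n>N$ and an element of $Wg^nW$ that also lies in the prescribed coset $x\Lambda$.

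The key point is that $x\Lambda\cap Wg^nW\neq\emptyset$ precisely when the image of $Wg^nW$ in $G/\Lambda$ contains the point $x\Lambda$, equivalently when $g^n\in W^{-1}xW'$ for suitable translates, i.e. when $g^n$ meets a certain open set. I would phrase this as an intersection condition in $G/\Lambda$: let $\pi:G\to G/\Lambda$ be the projection, pick a small open $U\ni e$ with $U\subset W$ and $UU^{-1}\subset W$, and set $A=\pi(Ux^{-1})$ (or rather work with $A=\pi(U)$ and $B=\pi(U x)$ appropriately normalized) so that $A\cap g^nB\neq\emptyset$ forces $g^n$ into $U^{-1}(x\,\text{-translate})U\subset Wg^0\cdots$; the bookkeeping here is routine but must be arranged so that a nonempty intersection delivers an element of $x\Lambda\cap Wg^nW$. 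Since $g$ is hyperbolic it is in particular $\mathbb R$-diagonalizable with distinct extreme eigenvalues, so $\{g^n\}$ leaves every compact subset of $G$ eventually (its translation lengths go to infinity), which is exactly the hypothesis needed to invoke \ref{eq:mixing property}. Choosing $A$ and $B$ to have positive $\mu$-measure (possible because $U$ is open and nonempty and $\mu$ is a nonzero $G$-invariant Radon measure, finite by the lattice hypothesis), mixing gives $\mu(A\cap g^nB)\to\mu(A)\mu(B)>0$, so $A\cap g^nB\neq\emptyset$ for all sufficiently large $n$; picking such an $n>N$ finishes the argument.

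The main obstacle, and the step deserving the most care, is the translation of the group-theoretic condition "$x\Lambda\cap Wg^nW\neq\emptyset$" into a measure-theoretic intersection condition in $G/\Lambda$ of the form "$A\cap g^nB\neq\emptyset$" with $A,B$ of positive measure and, crucially, \emph{independent of $n$}. One must be careful that the sets $A,B$ do not depend on $n$ (so that Howe--Moore applies to a fixed pair), that they have positive measure, and that a nonempty intersection genuinely produces an element both in the coset $x\Lambda$ and in $Wg^nW$ (up to possibly shrinking $W$ at the outset, which is harmless since Lemma \ref{lem:W neigh} allows any sufficiently small symmetric $W$). Once this dictionary is set up correctly, the rest is a direct application of the quoted lemmas and the mixing theorem.
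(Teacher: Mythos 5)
Your proposal is correct and follows essentially the same route as the paper: reduce via Lemma \ref{lem:adding arbitrary trans elt} and Lemma \ref{lem:W neigh} to finding an element of $x\Lambda\cap Wg^nW$ for some large $n$, then get such an element from Howe--Moore mixing applied to the fixed ($n$-independent) positive-measure sets $W\Lambda$ and $Wx\Lambda$ in $G/\Lambda$. The precise bookkeeping you were hedging about is exactly that: a point in $g^nW\Lambda\cap Wx\Lambda$ gives $g^nw_1=w_2x\gamma$, so $h:=x\gamma=w_2^{-1}g^nw_1\in x\Lambda\cap Wg^nW$ by symmetry of $W$.
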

\begin{proof}
Using Lemma \ref{lem:adding arbitrary trans elt} we find some $g\in\mathfrak{H}(G)$
such that $\{g_{1},\dots,g_{s},g\}$ is a set of pairwise $\epsilon_{0}$-transversal
hyperbolic elements for some $\epsilon_{0}>0$. In order to finish
the proof, we only need to find $h\in x\Lambda$ which also satisfy
$h\in Wg^{n}W$ for some $n>N$ where $W=W(g,\epsilon_{0}),N=N(g,\epsilon_{0})$
are supplied by Lemma \ref{lem:W neigh}. To this end, we consider
the homogeneous space $G/\Lambda$ equipped with the unique left invariant
probability measure $\mu$ on $G/\Lambda$. As $g\in\mathfrak{H}(G)$,
for any compact subset $K\subset G$ there exists $M$ such that $g^{n}\notin K$
for all $n>M$. Therefore, by the Howe-Moore Theorem stated above,
we have that 
\[
\lim_{n\rightarrow\infty}\mu(g^{n}W\Lambda\cap Wx\Lambda)=\mu(W\Lambda)\mu(Wx\Lambda)>0
\]
so for large enough $n$, and in particular for some $n>N$, $g^{n}W\Lambda\cap Wx\Lambda\neq\emptyset$.
This yields some $w_{1},w_{2}\in W,\gamma\in\Lambda$ with $g^{n}w_{1}=w_{2}x\gamma$
so $h:=x\gamma=w_{2}^{-1}g^{n}w_{1}$, as desired.
\end{proof}

\section{Free $g_{0}$-rooted systems\label{sec:Free-rooted-systems}}
\begin{defn}
Given $g_{o}\in\mathfrak{H}(G)$, a tuple $\{g_{i}\}_{i=1}^{s}\subset\mathfrak{H}(G)$
is called a $g_{0}$-rooted free system if there exist open sets $\{X_{i}\}_{i=0}^{s}$
of $\bP$ satisfying: \end{defn}
\begin{enumerate}
\item $\{X_{i}\}_{i=0}^{s}$ are pairwise disjoint,
\item $A^{\pm}(g_{i})\subseteq X_{i}\subset\overline{X_{i}}\subset\bP\setminus B^{\pm}(g_{0})$
for all $i=0,.\dots s$,
\item $A^{\pm}(g_{0})\subseteq X_{0}\subset\overline{X_{0}}\subset\bP\setminus\cup_{i=1}^{s}B^{\pm}(g_{i})$
,
\item $g_{i}(\overline{X_{j}})\subset X_{i}$ for any $i,j\in\{0,\dots,s\}$.
\end{enumerate}
Note that by the so-called ping-pong Lemma \cite[Proposition 1.1]{Tits1972},
the elements $g_{0},\dots,g_{s}$ are independent. $(\{g_{i}\}_{i=1}^{s},g_{0})$
is clearly free. The following lemma exemplify how one can use $g_{0}$-rooted
systems for enlarge a given independent set of elements. 
\begin{lem}
\label{lem:Adding element with infinite occurences}Assume we are
given a $g_{0}$-rooted free system $\{g_{1},\dots,g_{s}\}$, a subset
$F\subset G$ and and element $h\in F\cap\mathfrak{H}(G)$ such that
\begin{itemize}
\item the set $M_{1}=\{n:g_{0}^{n}hg_{0}^{-n}\in F\}$ is infinite,
\item the sets $M_{2,l}=\{n:g_{0}^{l}h^{n}g_{0}^{-l}\in F\}$ are infinite
whenever $l\in M_{1}$,
\item $h\perp g_{i}$ for $i=0,\dots s$. 
\end{itemize}
Then, there exist $k,n_{0},n_{1}\in\bN$ such that $g_{s+1}:=g_{0}^{n_{0}}h^{n_{1}}g_{0}^{-n_{0}}\in F$
and $\{g_{1},\dots,g_{s},g_{s+1}\}$ is a $g_{0}^{k}$-rooted free
system.\end{lem}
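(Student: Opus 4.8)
The plan is to produce the new root $g_0^k$, find large exponents $n_0 \in M_1$ and $n_1$ so that $g_{s+1} := g_0^{n_0} h^{n_1} g_0^{-n_0}$ lands in $F$, and then exhibit the required open sets $\{Y_i\}_{i=0}^{s+1}$ of $\bP$ witnessing that $\{g_1,\dots,g_s,g_{s+1}\}$ is a $g_0^k$-rooted free system. First I would recall the data coming with the given $g_0$-rooted free system: open sets $X_0,\dots,X_s$ satisfying (1)--(4). Since $h \perp g_i$ for all $i=0,\dots,s$, the attracting points $A^\pm(h)$ avoid all the repelling subspaces $B^\pm(g_i)$ and in particular $B^\pm(g_0)$, and conversely $A^\pm(g_i) \notin B^\pm(h)$. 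Because everything in sight is a finite set of transversal hyperbolic elements, we may fix $\epsilon>0$ so that $\{g_0,\dots,g_s,h\}$ is $\epsilon$-transversal, and shrink the $X_i$ so that $\overline{X_i}$ still satisfies (2)--(4) while also staying $\epsilon$-far from $A^\pm(h)$.

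Next I would introduce a new small open set $X_{s+1}$ around $A^\pm(h)$, disjoint from $\overline{X_0},\dots,\overline{X_s}$ and contained in $\bP \setminus B^\pm(g_0)$; this uses $h \perp g_0$ and $h \perp g_i$. The key dynamical input is that high powers contract toward attracting points: for $m$ large, $h^m$ maps any compact set missing $B^\pm(h)$ into a small neighborhood of $A^\pm(h)$, and similarly $g_0^{m}$ maps any compact set missing $B^\pm(g_0)$ into a small neighborhood of $A^\pm(g_0)$. Using these two facts I would choose first $n_1$ large enough that $h^{n_1}$ sends $\overline{X_0} \cup \cdots \cup \overline{X_{s+1}}$ (all of which miss $B^\pm(h)$ by transversality and the choices above) into $X_{s+1}$; here I must also arrange $h^{n_1} \in$ the relevant coset condition, i.e. pick $n_1$ in the set $M_{2,n_0}$ once $n_0$ is chosen. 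Then the conjugate $g_{s+1} = g_0^{n_0} h^{n_1} g_0^{-n_0}$ has $A^\pm(g_{s+1}) = g_0^{n_0} A^\pm(h^{n_1})$ and $B^\pm(g_{s+1}) = g_0^{n_0} B^\pm(h)$, and for $n_0$ large $g_0^{n_0}$ keeps $A^\pm(g_{s+1})$ very close to $A^\pm(g_0)$, hence inside $X_0$, while its action barely moves the other $X_i$ off themselves. The final geometric arrangement is: let $k$ be a large multiple so that $g_0^k$ contracts everything outside $B^\pm(g_0)$ into the part of $X_0$ that is far from $A^\pm(g_{s+1})$; by \eqref{eq:attractig same for powers} we have $A^\pm(g_0^k)=A^\pm(g_0)$, $B^\pm(g_0^k)=B^\pm(g_0)$, so conditions (2),(3) for the new root are inherited from those for $g_0$, after possibly enlarging $X_0$ slightly and re-checking $g_i(\overline{X_0}) \subset X_i$ still holds for $i=1,\dots,s$ since those maps are unchanged, and $g_0^k(\overline{X_j}) \subset X_0$ holds for $k$ large by mixing-free contraction.

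The main obstacle I expect is the simultaneous satisfaction of the membership constraint ``$g_{s+1}\in F$'' and the dynamical constraints. The dynamical constraints force $n_0$ and $n_1$ to be large, but $F$ only gives us that the relevant exponent sets $M_1$ and $M_{2,l}$ are infinite, not cofinite. The resolution is the order of quantification already suggested by the hypotheses: first fix any $n_0 \in M_1$ large enough for the $g_0^{n_0}$-contraction estimates (possible since $M_1$ is infinite); this $n_0$ is then held fixed, and since $n_0 \in M_1$ the set $M_{2,n_0}$ is infinite, so we may pick $n_1 \in M_{2,n_0}$ large enough for the $h^{n_1}$-contraction estimates; this guarantees $g_0^{n_0} h^{n_1} g_0^{-n_0} \in F$. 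One must double-check that the contraction threshold for $n_1$ does not depend on $n_1$ itself (it depends only on $n_0$, $\epsilon$, and the fixed sets), which it does not, and that the threshold for $k$ may be chosen after $n_0,n_1$ are fixed, using that $A^\pm(g_{s+1})$ and $B^\pm(g_{s+1})$ are by then determined and disjoint from $B^\pm(g_0)$. The remaining verifications — that the $Y_i$ can be taken pairwise disjoint, that $g_i(\overline{Y_j}) \subset Y_i$ for all $i,j \in \{0,\dots,s+1\}$ with $Y_0$ the $g_0^k$-root set, and that (2),(3) hold — are then routine applications of the contraction lemmas and transversality, essentially the same bookkeeping as in Lemma~\ref{lem:Wg^nW lemma}.
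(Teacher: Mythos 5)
Your overall strategy — fix $n_0 \in M_1$ first, then $n_1 \in M_{2,n_0}$, then choose $k$ — and the observation that the quantifier order is forced by ``infinite but not cofinite'' are both correct and match the paper. But there is a genuine gap in the geometric heart of the argument.

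You want to put a new open set $X_{s+1}$ around $A^\pm(h)$, disjoint from $\overline{X_0},\dots,\overline{X_s}$, by ``shrinking the $X_i$ so that $\overline{X_i}$ still satisfies (2)--(4) while also staying $\epsilon$-far from $A^\pm(h)$,'' and then to apply $h^{n_1}$ to the shrunken sets. This has two problems. First, condition (4) is not preserved under shrinking: for $i\neq 0$ the inclusion $g_i(\overline{X_j})\subset X_i$ involves a \emph{single} application of the fixed element $g_i$, not a high power, so $g_i(\overline{X_j})$ does not contract toward $A^\pm(g_i)$ as you shrink $X_j$; in general $g_i(A^\pm(g_j))$ is just some point of $X_i$ with no reason to be near $A^\pm(g_i)$, so a small $X_i'$ containing $A^\pm(g_i)$ will miss $g_i(\overline{X_j'})$. (You \emph{can} shrink $X_0$, compensating by passing to $g_0^k$, but you cannot raise the other $g_i$'s to powers — the lemma asks for the free system on $g_1,\dots,g_s,g_{s+1}$ themselves.) Second, even granting the shrinking, what you actually need for the step ``$h^{n_1}$ sends $\overline{X_0}\cup\cdots\cup\overline{X_{s+1}}$ into $X_{s+1}$'' is that each $\overline{X_i}$ misses $B^\pm(h)$; you only arranged $\epsilon$-distance from $A^\pm(h)$, and transversality $h\perp g_i$ controls $A^\pm(g_i)$ versus $B^\pm(h)$, not $\overline{X_i}$ versus $B^\pm(h)$. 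The parenthetical ``all of which miss $B^\pm(h)$ by transversality and the choices above'' is a non sequitur.

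The paper's proof avoids both issues by conjugating \emph{before} taking powers: set $h_{n_0}:=g_0^{n_0}hg_0^{-n_0}$ for large $n_0\in M_1$. Then $A^\pm(h_{n_0})=g_0^{n_0}A^\pm(h)$ is pulled into $X_0$ (near $A^\pm(g_0)$), so the new box $Y_{s+1}$ can be placed \emph{inside} $\overline{X_0}$ rather than needing disjointness from the other $X_i$; and $B^\pm(h_{n_0})=g_0^{n_0}B^\pm(h)$ is pushed toward $B^-(g_0)$, which is disjoint from all $\overline{X_i}$ by condition (2), so $\overline{X_i}\cap B^\pm(h_{n_0})=\emptyset$ for $i=1,\dots,s$ holds \emph{for the original, unshrunk} $X_i$. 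Consequently one keeps $Y_i=X_i$ for $i\geq 1$, shrinks only $X_0$ to $Y_0$ (compensated by $g_0^k$), and takes a power $h_{n_0}^{n_1}$ with $n_1\in M_{2,n_0}$. Your writeup inverts this order — power first, conjugate at the end — and that inversion is exactly what forces the unachievable shrinking.
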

\begin{proof}
Let $\{X_{i}\}_{i=0}^{s}$ be the sets showing that $\{g_{1},\dots,g_{s}\}$
is a $g_{0}$-rooted free system and let $h_{n}:=g_{0}^{n}hg_{0}^{-n}$.
As $g_{0}\perp h$, there exists $N_{1}\in\bN$ such that for any
$n>N_{1}$, the following holds: 
\begin{equation}
A^{\pm}(h_{n})=g_{0}^{n}(A^{\pm}(h))\subset X_{0}\label{eq:in X0-1}
\end{equation}
and for any $i=1,\dots,s$ we have $g_{0}^{-n}\overline{X_{i}}\subset\bP\setminus B^{\pm}(h)$
which is equivalent to 
\begin{equation}
\overline{X_{i}}\subset\bP\setminus g_{0}^{n}B^{\pm}(h)=\bP\setminus B^{\pm}(h_{n}).\label{eq: trans to i-1}
\end{equation}
This show that $\{g_{1},\dots,g_{s},h_{n}\}$ are pairwise transversal
for all $n>N_{1}$. Furthermore, it is easy to see that $g_{0}$ and
$h_{n}$ are transversal for any $n\in\bN$. By assumption, there
are arbitrarily large $n\in\bN$ for which $h_{n}\in F$ so we can
choose $n_{0}>N_{1}$ such that $h_{n_{0}}\in F$.

We now define open subsets 
\begin{equation}
\{Y_{i}\}_{i=0}^{s+1}\label{eq:Y_i's}
\end{equation}
 that will show that for some $n_{1}$, $\{g_{1},\dots,g_{s},h_{n_{0}}^{n_{1}}\}$
is $g_{0}^{k}$-rooted system ($n_{1}$ and $k$ will be defined momentarily).
Let $Y_{i}=X_{i}$ for $i=1,\dots,s$ and $Y_{s+1}$ be an open subset
of $\bP$ such that $Y_{s+1}\subset\overline{X_{0}}$ and 
\begin{equation}
A^{\pm}(h_{n_{0}})\subset Y_{s+1}\subset\overline{Y_{s+1}}\subset\bP\setminus B^{\pm}(g_{0}).\label{eq:n0 away from g0-1}
\end{equation}
Furthermore, we let $Y_{0}$ be an open subset of $\bP$ such that
\begin{equation}
A^{\pm}(g_{0})\subset Y_{0}\subset\overline{Y_{0}}\subset X_{0}\,\text{ and }\,\overline{Y_{0}}\subset\bP\setminus B^{\pm}(h_{n_{0}}).\label{eq:def of Y0-1}
\end{equation}
Such sets exist since we've seen that $\{g_{0},\dots,g_{s},h_{n_{0}}\}$
are pairwise transversal; this also imply that there exists $N_{2}$
such that for any integer $n$ with $|n|>N_{2}$ we have that 
\begin{equation}
h_{n_{0}}^{n}(Y_{i})\subset Y_{s+1}\,\,\text{for }i=0,\dots,s.\label{eq: large power to Xs-1}
\end{equation}
Finally, by assumption , the set 
\begin{equation}
M:=\{n\in\bN:h_{n_{0}}^{n}\in F\}\label{eq: infinite set of powers in the coset-1}
\end{equation}
is infinite. Let $g_{s+1}=h_{n_{0}}^{n_{1}}$ for some $n_{1}\in M$
with $n_{1}>N_{2}$. The above transversality also imply that there
exists $k\in\bN$ such that 
\begin{equation}
g_{0}^{k}(\cup_{i=0}^{s+1}Y_{i})\subset Y_{0}.\label{eq:large enough k-1}
\end{equation}

We end the proof by showing that $\{g_{1},\dots,g_{s},g_{s+1}\}$
is $g_{0}^{k}$-rooted system, using the sets $\{Y_{i}\}_{i=0}^{s+1}$.
Condition (1) is satisfied by the choice of the sets $\{Y_{i}\}_{i=0}^{s+1}$.
$ $Using equation \ref{eq:attractig same for powers}, we see that
equation \ref{eq:n0 away from g0-1} and the fact that $X_{i}=Y_{i}$
for $i=1,\dots,s$ imply the Condition (2). Similarly, equation \ref{eq:def of Y0-1}
imply Condition (3). Lastly, equation \ref{eq: large power to Xs-1}
imply condition (4) for i=s+1 and equation \ref{eq:large enough k-1}
does it for i=0.
\end{proof}
The following two propositions are the main ingredients in the proofs
of our main Theorems:
\begin{prop}
\label{prop:adding elt in coset to free system}Let $\Gamma_{1}\lhd_{fi}\Gamma$,
$x\in\Gamma$ and $\{g_{1},\dots,g_{s}\}$ be a $g_{0}$-rooted free
system. Then, there exist $k\in\bN$ and a hyperbolic element $g_{s+1}\in x\Gamma_{1}$
such that $\{g_{1},\dots,g_{s},g_{s+1}\}$ is a $g_{0}^{k}$-rooted
free system.\end{prop}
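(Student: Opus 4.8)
The plan is to combine Proposition~\ref{prop:trans elmt in coset} (which produces a transversal element in a prescribed coset of a lattice) with Lemma~\ref{lem:Adding element with infinite occurences} (which upgrades such a transversal element to a new member of a $g_0$-rooted system, at the cost of passing to a power of $g_0$). The only gap to bridge is that Lemma~\ref{lem:Adding element with infinite occurences} requires not merely an element $h$ of the target set $F = x\Gamma_1$, but one whose $g_0$-conjugates $g_0^n h g_0^{-n}$ and their powers $g_0^l h^n g_0^{-l}$ return to $F$ infinitely often. I would arrange this by exploiting that $\Gamma_1 \lhd_{fi}\Gamma$, so conjugation by $g_0\in\Gamma$ permutes the cosets of $\Gamma_1$ with some finite period $p$ (namely the order of $g_0\Gamma_1$ in the finite group $\Gamma/\Gamma_1$), and similarly taking powers of an element lands in an arithmetic-progression pattern of cosets.

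First I would apply Lemma~\ref{lem:adding arbitrary trans elt} to get $g\in\mathfrak{H}(G)$ with $g\perp g_i$ for $i=0,\dots,s$; fix $\epsilon_0>0$ so that $\{g_0,\dots,g_s,g\}$ is pairwise $\epsilon_0$-transversal, and let $W=W(g,\epsilon_0)$, $N=N(g,\epsilon_0)$ be as in Lemma~\ref{lem:W neigh}. Next, I need a coset $y\Gamma_1$ such that any element of $y\Gamma_1$ produced below will have the required return properties to $x\Gamma_1$. Concretely: let $p$ be the order of $g_0$ in $\Gamma/\Gamma_1$, and choose the target coset for $h$ to be $\Gamma_1$ itself (i.e.\ look for $h\in\Gamma_1\cap\mathfrak H(G)$ with the approximation property), so that $g_0^n h g_0^{-n}\in\Gamma_1$ for all $n$, hence the set $M_1=\{n: g_0^n h g_0^{-n}\in F\}$ where $F=x\Gamma_1$ is either empty or all of $\bZ$ depending on whether $\Gamma_1=x\Gamma_1$; that is too restrictive. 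The cleaner route is: apply Proposition~\ref{prop:trans elmt in coset} with the lattice $\Lambda=\Gamma_1$ and the point $x$, to obtain $h\in x\Gamma_1$ with $\{g_1,\dots,g_s,h\}$ pairwise transversal; then observe that $g_0^n h g_0^{-n}$ lies in $g_0^n x g_0^{-n}\Gamma_1$ (using normality of $\Gamma_1$), and since $g_0\Gamma_1$ has finite order $p$ in $\Gamma/\Gamma_1$ while conjugation is an inner automorphism of the finite group $\Gamma/\Gamma_1$, the coset $g_0^n h g_0^{-n}\Gamma_1$ is periodic in $n$; choosing the initial $h$ inside the \emph{right} coset — one fixed by this conjugation action, which exists because we may instead run Proposition~\ref{prop:trans elmt in coset} on the coset $x'\Gamma_1$ where $x'$ is chosen in the (nonempty) set of conjugation-fixed cosets that still meets the dense image — makes $M_1$ infinite (indeed all multiples of some period). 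A symmetric argument with powers $h\mapsto h^n$: the cyclic group generated by $h\Gamma_1$ in $\Gamma/\Gamma_1$ is finite, so $\{n: h^n\Gamma_1 = \Gamma_1\}$, hence $\{n: g_0^l h^n g_0^{-l}\in F\}$ for appropriate $l$, is an infinite arithmetic progression, giving the infinitude of $M_{2,l}$.

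Having secured an $h\in\mathfrak H(G)\cap F$ with $h\perp g_i$ for all $i$ and with $M_1$, $M_{2,l}$ infinite, I would feed $(\{g_1,\dots,g_s\}, F, h)$ into Lemma~\ref{lem:Adding element with infinite occurences}, which yields $k,n_0,n_1\in\bN$ with $g_{s+1}:=g_0^{n_0}h^{n_1}g_0^{-n_0}\in F=x\Gamma_1$ and $\{g_1,\dots,g_s,g_{s+1}\}$ a $g_0^k$-rooted free system. Since $g_{s+1}$ is conjugate to a power of $h$, it is hyperbolic, and it lies in $x\Gamma_1$ by construction, so both conclusions hold. The main obstacle, and the part requiring care, is the bookkeeping in the previous paragraph: ensuring that the coset in which Proposition~\ref{prop:trans elmt in coset} deposits $h$ is compatible with the periodicity demands of Lemma~\ref{lem:Adding element with infinite occurences}. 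The resolution is that all the relevant "return" conditions are congruence conditions modulo the finite group $\Gamma/\Gamma_1$, hence automatically satisfied along infinite arithmetic progressions once a single witness in the correct coset class is found, and such a witness is supplied by the mixing argument of Proposition~\ref{prop:trans elmt in coset} applied to the appropriate translate.
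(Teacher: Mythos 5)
Your proposal is essentially the paper's proof: apply Proposition~\ref{prop:trans elmt in coset} with $\Lambda=\Gamma_1$ to produce a hyperbolic $h\in x\Gamma_1$ transversal to $g_1,\dots,g_s$, observe that $M_1$ and the $M_{2,l}$ are infinite because the conditions defining them are congruences in the finite group $\Gamma/\Gamma_1$, then feed everything into Lemma~\ref{lem:Adding element with infinite occurences} with $F=x\Gamma_1$. The paper's proof is exactly this, stated in three sentences.

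One remark on the middle of your argument: the detour through ``conjugation-fixed cosets $x'\Gamma_1$'' is both unnecessary and, if actually followed, harmful. Since $h\in x\Gamma_1$ already, the sequence of cosets $g_0^n h g_0^{-n}\Gamma_1$ is periodic in $n$ with period dividing the order of $g_0\Gamma_1$ in $\Gamma/\Gamma_1$, and it equals $x\Gamma_1$ at $n=0$; hence it returns to $x\Gamma_1$ along an infinite arithmetic progression with no choice of special coset needed. By contrast, if you really ran Proposition~\ref{prop:trans elmt in coset} on a different $x'\Gamma_1$, you would end up with $g_{s+1}\in x'\Gamma_1$ rather than $x\Gamma_1$, defeating the purpose. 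Your closing paragraph does state the correct resolution (the return conditions are automatic congruences mod $\Gamma/\Gamma_1$ once a witness is in the right coset), so the confusion is self-corrected, but the fixed-coset clause should simply be deleted.
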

\begin{proof}
By Proposition \ref{prop:trans elmt in coset} we can find hyperbolic
$h\in x\Gamma_{1}$ such that $\{g_{1},\dots,g_{s},h$\} are pairwise
transversal. Moreover, as $g_{0}\in\Gamma$ and $\Gamma_{1}\lhd_{fi}\Gamma$,
the set $M_{1}=\{n:g_{0}^{n}hg_{0}^{-n}\in x\Gamma_{1}\}$ is infinite
and so are the sets $M_{2,l}=\{n:g_{0}^{l}h^{n}g_{0}^{-l}\in x\Gamma_{1}\}$
whenever $l\in M_{1}$. Thus, applying Lemma \ref{lem:Adding element with infinite occurences}
with $F=x\Gamma_{1}$ we find $k\in\bN$ and a hyperbolic element
$g_{s+1}\in x\Gamma_{1}$ such that $\{g_{1},\dots,g_{s},g_{s+1}\}$
is a $g_{0}^{k}$-rooted free system, as claimed.\end{proof}
\begin{prop}
\label{prop:adding elt outside a subgroup}Let $H<\Gamma$, be a profinitely
dense subgroup and $\{g_{1},\dots,g_{s}\}$ be a $g_{0}$-rooted free
system, with $\{g_{i}\}_{i=0}^{s}\subset\Gamma$. Then, there exists
a hyperbolic element $g_{s+1}\notin H$ such that $\{g_{1},\dots,g_{s},g_{s+1}\}$
is a $\tilde{g_{0}}$-rooted free system for some $\tilde{g_{0}}\in\mathfrak{H}(G)\cap\Gamma$
(typically $\tilde{g_{0}}=g_{0}^{k}$ for some $k\in\bN$).\end{prop}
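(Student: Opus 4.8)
The plan is to exploit a cardinality gap: $H$ is a subgroup of $\Gamma$, which is countable, so $H$ is countable, whereas I will produce an \emph{uncountable} family of hyperbolic elements each of which completes $\{g_1,\dots,g_s\}$ to a $g_0^k$-rooted free system (with $k$ depending on the member of the family). Having both facts in hand, one simply selects a completion that avoids $H$. Note that profinite density of $H$ will play no role; only $H\subset\Gamma$ is used.

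First I would invoke Lemma \ref{lem:adding arbitrary trans elt} to fix a single $g\in\mathfrak H(G)$ with $g\perp g_i$ for all $i=0,\dots,s$. Conjugation preserves semisimplicity, and the data $A^\pm(\cdot),B^\pm(\cdot)$ depend continuously on a hyperbolic element near $g$, so there is a neighbourhood $W$ of the identity in $G$ such that for every $w\in W$ the element $wgw^{-1}$ is hyperbolic and still transversal to all of $g_0,\dots,g_s$. For each such $w$ the hypotheses of Lemma \ref{lem:Adding element with infinite occurences} are met with $F=G$: the set $\{g_1,\dots,g_s\}$ is a $g_0$-rooted free system by assumption, the sets $M_1$, $M_{2,l}$ and $M$ are vacuously infinite when $F=G$, and transversality of $wgw^{-1}$ to $g_0,\dots,g_s$ is exactly what membership in $W$ grants. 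Hence Lemma \ref{lem:Adding element with infinite occurences} supplies integers $k_w,n_0^w,n_1^w$ for which
\[
\psi(w):=g_0^{n_0^w}\,w\,g^{n_1^w}\,w^{-1}\,g_0^{-n_0^w}
\]
is hyperbolic and $\{g_1,\dots,g_s,\psi(w)\}$ is a $g_0^{k_w}$-rooted free system.

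The main obstacle is to check that $\psi(W)$ is uncountable. Suppose not. Since the triple $(n_0^w,n_1^w,\psi(w))$ then takes only countably many values, and since $wg^{b}w^{-1}=g_0^{-a}cg_0^{a}$ whenever $n_0^w=a$, $n_1^w=b$ and $\psi(w)=c$, we may write
\[
W\ \subseteq\ \bigcup_{a,b\in\bN}\ \bigcup_{c\in\psi(W)}\ \{\,w\in W:\ wg^{b}w^{-1}=g_0^{-a}cg_0^{a}\,\},
\]
a countable union of sets, each closed in $W$ (the map $w\mapsto wg^bw^{-1}$ being continuous). As $W$ is an open subset of the locally compact group $G$, the Baire category theorem yields one of these sets, say $\{w\in W:wg^{b}w^{-1}=c'\}$, with non-empty interior; thus $w\mapsto wg^{b}w^{-1}$ is constant on a non-empty open $O\subset G$. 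Then $Z_G(g^{b})$ contains the non-empty open set $O^{-1}O$; being Zariski closed and containing a non-empty open subset of the Zariski-connected group $G$, it must equal $G$, so $g^{b}$ is central. By Tits' strong irreducibility of $\rho$ a central element is scalar, which is impossible for the hyperbolic element $g^{b}$. This contradiction proves that $\psi(W)$ is uncountable.

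Finally, since $H$ is countable and $\psi(W)$ is not, I would choose $w\in W$ with $g_{s+1}:=\psi(w)\notin H$. Then $g_{s+1}$ is hyperbolic, lies outside $H$, and $\{g_1,\dots,g_s,g_{s+1}\}$ is a $\tilde g_0$-rooted free system with $\tilde g_0:=g_0^{k_w}$, which lies in $\mathfrak H(G)\cap\Gamma$ since $g_0\in\Gamma$ --- which is exactly what the proposition asserts.
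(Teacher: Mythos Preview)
Your argument is internally sound, but it misses the real content of the proposition: the new element $g_{s+1}$ must lie in $\Gamma$. The paper's statement is admittedly sloppy on this point, but look at how the proposition is used in the proof of Theorem~\ref{thm:Main Theorem maximal subgroups}: the sequence $\{g_i\}$ produced there is explicitly asserted to lie in $\mathfrak H(G)\cap\Gamma$, the group $\langle g_i\rangle$ is required to be a subgroup of $\Gamma$ (so that ``profinitely dense in $\Gamma$'' makes sense), and Proposition~\ref{prop:adding elt outside a subgroup} must be applied again at the next step, which already requires $\{g_i\}_{i=0}^{s+1}\subset\Gamma$. The paper's own proof does deliver $g_{s+1}\in\Gamma$: in the first case $g_{s+1}=g_0^{n_0}h^{n_1}g_0^{-n_0}$ with $g_0\in\Gamma$ and $h=w\tilde h w^{-1}$ where $w\in H\subset\Gamma$ and $\tilde h\in\Gamma$; in the second case $h$ is produced in $\Gamma$ via Howe--Moore.

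Your element $\psi(w)=g_0^{n_0^w}\,w\,g^{n_1^w}\,w^{-1}\,g_0^{-n_0^w}$ has no reason to lie in $\Gamma$: the perturbation $w$ ranges over an open set in $G$, and $g$ comes from Lemma~\ref{lem:adding arbitrary trans elt}, which only places it in $G$. If you try to repair this by taking $g\in\Gamma$ (say via Proposition~\ref{prop:trans elmt in coset} with $x=e$) and restricting $w$ to $\Gamma$, you lose the Baire argument, since $\Gamma$ is countable and discrete. This is exactly why profinite density is not decorative: the paper needs it to conclude (via \cite{MS81}) that $H$ is Zariski dense, which is what allows one to choose the conjugating element $w$ inside $H\subset\Gamma$ while still hitting the Zariski-open transversality conditions, and to locate a hyperbolic $\tilde h\in\Gamma\setminus H$ to begin with. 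Your remark that ``profinite density of $H$ will play no role'' is thus a symptom of the gap rather than a simplification.
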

\begin{proof}
Consider first the case when $g_{0}\in H$. By Proposition 3 of \cite{MS81}
we know that $H$ is Zariski dense since it is profinitely dense.
It follows from \cite[Lemma 8]{MS81} that there exists $\tilde{h}\in\mathfrak{H}(G)$
with $\tilde{h}\in\Gamma\setminus H$. Now, 
\[
W=\{w\in G:\forall i\in\{0,\dots s\},(w\tilde{h}w^{-1})\perp g_{i}\}
\]
 is Zariski open; indeed,
\[
W=\cap_{i=1}^{s}\{w:(wA^{\pm}(\tilde{h}))\cap B^{\pm}(g_{i})=\emptyset\}\bigcap\cap_{i=1}^{s}\{w:B^{\pm}(\tilde{h})\cap w^{-1}A^{\pm}(g_{i})=\emptyset\}
\]
and each on of the sets in the intersection is clearly a Zariski open
set. As $H$ is Zariski dense, we can find some element $w\in H\cap W$.
Let $h=w\tilde{h}w^{-1}$. Clearly, $h\in\mathfrak{H}(G)$, $h\in\Gamma\setminus H$
and the elements of $\{g_{1},\dots,g_{s},h$\} are pairwise transversal.
Moreover, as $g_{0}\in H$, the set $M_{1}=\{n:g_{0}^{n}hg_{0}^{-n}\in\Gamma\setminus H\}=\bN$
and the sets $M_{2,l}=\{n:g_{0}^{l}h^{n}g_{0}^{-l}\in\Gamma\setminus H\}$
are infinite whenever $l\in M_{1}$. Thus, applying Lemma \ref{lem:Adding element with infinite occurences}
with $F=\Gamma\setminus H$ we find $k\in\bN$ and a hyperbolic element
$g_{s+1}\in\Gamma\setminus H$ such that $\{g_{1},\dots,g_{s},g_{s+1}\}$
is a $g_{0}^{k}$-rooted free system, which conclude the case when
$g_{0}\in H$.

Now assume $g_{0}\notin H$. Let $\{X_{i}\}_{i=0}^{s}$ be the sets
showing that $\{g_{1},\dots,g_{s}\}$ is a $g_{0}$-rooted free system
and set $d=min_{1\leq i\leq s}d(B^{\pm}(g_{0},\overline{X_{i}}))$.
Let
\[
U_{1}=\{g\in G:g\in\mathfrak{H}(G),g\perp g_{0}\},
\]
\[
U_{2}=\{g\in G:A^{\pm}(g)\subset X_{0},d_{h}(B^{\pm}(g_{0}),B^{\pm}(g))<\frac{d}{4}\}
\]
and $U=U_{1}\cap U_{2}$. We now consider two possibilities: if $U\cap H\neq\emptyset$
it is easy to see that there exist $h_{0}\in U\cap H$ and $k\in\bN$
such that $\{g_{1},\dots,g_{s}\}$ is a $h_{0}^{k}$-rooted free system.
Thus, this possibility reduces to the first case with $g_{0}$ interchanged
with $h_{0}^{k}\in H$. 

Therefore we assume that $U\cap H=\emptyset$. Note that by Lemma
\ref{lem:Wg^nW lemma} for any $g\in U$ there exist $W$ and $N_{0}\in\bN$
such that $Wg^{n}W\subset U$ for all $n>N$. As $g\in\mathfrak{H}(G)$,
for any compact subset $K\subset G$ there exists $M$ such that $g^{n}\notin K$
for all $n>M$. Therefore, by the Howe-Moore Theorem stated above,
we have that 
\[
\lim_{n\rightarrow\infty}\mu(g^{n}W\Gamma\cap W\Gamma)=\mu(W\Gamma)\mu(W\Gamma)>0
\]
so for large enough $n$, and in particular for some $n>N_{0}$, $g^{n}W\Gamma\cap W\Gamma\neq\emptyset$.
This yields some $w_{1},w_{2}\in W,\gamma\in\Gamma$ with $g^{n}w_{1}=w_{2}\gamma$
so $h:=\gamma=w_{2}^{-1}g^{n}w_{1}\in\Gamma\cap U$ as desired. By
the definition of $U$, the elements of $\{g_{1},\dots,g_{s},h\}$
are pairwise transversal, and all of them are transversal to $g_{0}$.
To show that they form a $g_{0}$-rooted free system, we can follows
the proof of Lemma \ref{lem:Adding element with infinite occurences}
from equation \ref{eq:Y_i's} onwards, interchanging $h_{n_{0}}$
with $h$ everywhere, and noting that \ref{eq: infinite set of powers in the coset-1}
is also true in our situation as the set 
\[
M=\{n:h^{n}\notin H\}
\]
 is infinite since $h\notin H$ by the assumption that $U\cap H=\emptyset$.
\end{proof}

\section{Proof of theorems \ref{thm:Main Theorem} and \ref{thm:Main Theorem maximal subgroups}\label{sec:Proof-main-thm}}
\begin{proof}[Proof of Theorem \ref{thm:Main Theorem}:]
We first claim that there exist $h_{1},h_{2},g_{0}\in\Gamma$ such
that $\{h_{1},h_{2}\}$ is $g_{0}$-rooted free system and $\langle h_{1},h_{2}\rangle$
is Zariski dense in $G$. Indeed, by \cite[Theorem 3]{Tits1972} we
can find $f_{1},f_{2}\in\Gamma$ such that $\langle f_{1},f_{2}\rangle$
is free and Zariski dense in $G$. By connectedness of $G$, the same
is true for $f_{1}^{k},f_{2}^{k}$ for any $k\in\bN$. Moreover, by
Proposition \ref{prop:trans elmt in coset} we can find $f_{0}\in\Gamma$
such that $\{f_{0},f_{1},f_{2}\}$ are pairwise transversal. Therefore,
there exists a $k\in\bN$ such that $\{f_{1}^{k},f_{2}^{k}\}$ are
$f_{0}^{k}$-free rooted system. Thus the claim is proved by letting
$g_{0}:=f_{0}^{k},h_{1}:=f_{1}^{k},h_{2}:=f_{2}^{k}$. 

As explain at the end of section \ref{sub:CSP}, since $\Gamma$ admits
that Congruence Subgroup Property and $\langle h_{1},h_{2}\rangle$
is Zariski dense, the closure of $\langle h_{1},h_{2}\rangle$ is
of finite-index in $\widehat{\Gamma}$. By Lemma \ref{lem:subgroups of profinite completion}
there exists $\Gamma_{1}\vartriangleleft_{fi}\Gamma$ such that $\widehat{\Gamma_{1}}\subset\overline{\langle h_{1},h_{2}\rangle}$.
Let $y_{1},\dots,y_{m}$ be elements of $\widehat{\Gamma}$ which
generate of $\widehat{\Gamma}/\widehat{\Gamma_{1}}$ with $m\leq d(\widehat{\Gamma})$.
By Lemma \ref{lem:subgroups of profinite completion}, there exist
$x_{1},\dots,x_{m}$ with $x_{i}\Gamma_{1}=y_{i}\widehat{\Gamma_{1}}\cap\Gamma$.
Then, using Proposition \ref{prop:adding elt in coset to free system}
inductively, we find $\{g_{i}\}_{i=1}^{m}\in\Gamma$ such that $g_{i}\in x_{i}\Gamma_{2}$
and $\{h_{1},h_{2},g_{1},\dots,g_{m}\}$ is a $g_{0}^{k}$-rooted
free system for some $k\in\bN$. Therefore the image of $\Gamma_{1}$
in $\widehat{\Gamma}$ together with $\{g_{1},\dots,g_{m}\}$ generate
$\widehat{\Gamma}$ (topologically). This shows that that $\langle h_{1},h_{2},g_{1},\dots,g_{m}\rangle$
is profinitely dense and is free of rank $m+2$, as asserted.
\end{proof}

\begin{proof}[Proof of Theorem \ref{thm:Main Theorem maximal subgroups}]
 Assume, by a way of contradiction, that $\mathfrak{U_{m}}$ is countable
and let $\{U_{i}\}_{i=1}^{\infty}$ be some enumeration of it. Let
$\{F_{i}\}_{i=1}^{\infty}$ be an enumeration of all the cosets of
finite-index subgroups of $\Gamma$. Let $g_{0}\in\Gamma$ be some
hyperbolic element. Using Proposition \ref{prop:adding elt in coset to free system}
(with the empty $g_{0}$-rooted free system) we can find $g_{1}\in F_{1}$
and $g_{0,1}\in\mathfrak{H}(G)\cap\Gamma$ such that $\{g_{1}\}$
is a $g_{0,1}$-rooted free system. Similarly, using Proposition \ref{prop:adding elt outside a subgroup},
we can find $g_{2}$ and $g_{0,2}\in\mathfrak{H}(G)\cap\Gamma$ such
that $g_{2}\notin U_{1}$ and $\{g_{1},g_{2}\}$ is a $g_{0,2}$-rooted
free system. Continuing in this fashion and using Propositions \ref{prop:adding elt in coset to free system}
and \ref{prop:adding elt outside a subgroup} alternately, we find
a sequences $\{g_{i}\}_{i=1}^{\infty},\{g_{0,i}\}_{i=1}^{\infty}\subset\mathfrak{H}(G)\cap\Gamma$
such that for any $k\in\bN$, $g_{2k}\notin U_{k}$ , $g_{2k-1}\in F_{k}$
and for any $l\in\bN$, $\{g_{1},\dots,g_{l}\}$ is $g_{0,l}$-rooted
free system. Let $H=\langle\{g_{i}\}_{i=1}^{\infty}\rangle$; we claim
that $H$ is free and profinitely dense. Indeed, $H$ is freely generated
by $ $$\{g_{i}\}_{i=1}^{\infty}$ since any finite subset of $\{g_{i}\}_{i=1}^{\infty}$
is contained in some free rooted system. It is profinitely dense as
otherwise $H$ would be contained in a finite-index subgroup $\Lambda$,
which is impossible as $H$ has elements in each coset of $\Lambda$
in $\Gamma$. Therefore $H\subseteq U_{i}$ for some $i$. This is
a contradiction as $g_{2i}\notin U_{i}$.
\end{proof}

\subsection*{Acknowledgments }

We acknowledge the support of the ERC grant 226135, the ERC Grant
203418, the ISEF foundation, the Ilan and Asaf Ramon memorial foundation,
the \textquotedbl{}Hoffman Leadership and Responsibility\textquotedbl{}
fellowship program, the ISF grant 1003/11 and the BSF grant 2010295.

\author{\bibliographystyle{plain}
\bibliography{biblio}
}
\end{document}